\newcommand{\primal}{x}
\newcommand{\dualit}{y}
\newcommand{\momentumone}{u}
\newcommand{\momentumtwo}{v}
\newcommand{\taustep}{\tau}
\newtheorem{theorem}{Theorem}[section]
\newtheorem{lemma}[theorem]{Lemma}
\newtheorem{corollary}[theorem]{Corollary}
\newtheorem{remark}[theorem]{Remark}
\numberwithin{equation}{section}
\newcommand{\defeq}{\overset{\text{\tiny def}}{=}}
\tikzset{
  treenode/.style = {shape=rectangle, rounded corners,
                     draw, align=center,
                     top color=white,
                     bottom color=blue!20},
  root/.style     = {treenode, font=\Large,
                     bottom color=red!30},
  env/.style      = {treenode, font=\ttfamily\normalsize},
  dummy/.style    = {circle,draw}
}
\newcommand{\prox}{\textnormal{prox}}
\DeclareMathOperator*{\argmin}{arg\,min}
\newcolumntype{C}[1]{>{\centering\arraybackslash}p{#1}}
\algrenewcommand\algorithmicrequire{\textbf{Input:}}
\algrenewcommand\algorithmicensure{\textbf{Output:}}
\title{Practical Acceleration of the Condat--V\~u Algorithm}
\date{} 
\author{Derek Driggs\thanks{Department of Applied Mathematics and Theoretical Physics, University of Cambridge, Cambridge, UK.}
\and Matthias J. Ehrhardt\thanks{Department of Mathematical Sciences, University of Bath, Bath, UK}
\and Carola-Bibiane Sch\"onlieb\thanks{Department of Applied Mathematics and Theoretical Physics, University of Cambridge, Cambridge, UK.} \and Junqi Tang\thanks{School of Mathematics, University of Birmingham, Birmingham, UK 
  (j.tang.2@bham.ac.uk).}}
\tikzstyle{every picture}+=[remember picture]
\tikzstyle{na} = [baseline=-.5ex]
  \pgfplotsset{width=7cm,compat=1.8}
\begin{document}
\maketitle
\begin{abstract}
 The Condat--V\~u algorithm is a widely used primal-dual method for optimizing composite objectives of three functions. Several algorithms for optimizing composite objectives of two functions are special cases of Condat--V\~u, including proximal gradient descent (PGD). It is well-known that PGD exhibits suboptimal performance, and a simple adjustment to PGD can accelerate its convergence rate from $\mathcal{O}(1/T)$ to $\mathcal{O}(1/T^2)$ on convex objectives, and this accelerated rate is optimal. In this work, we show that a simple adjustment to the Condat--V\~u algorithm allows it to recover accelerated PGD (APGD) as a special case, instead of PGD. We prove that this accelerated Condat--V\~u algorithm achieves optimal convergence rates and significantly outperforms the traditional Condat--V\~u algorithm in regimes where the Condat--V\~u algorithm approximates the dynamics of PGD. We demonstrate the effectiveness of our approach in various applications in machine learning and computational imaging.
\end{abstract}



\section{Introduction}

Let $\mathcal{X}$ and $\mathcal{Y}$ be vector spaces with inner-product $\langle \cdot , \cdot \rangle$ and induced norm $\|\cdot\|$. This work is concerned with solving problems of the form
\begin{equation}
    \label{eq:main}
    \min_{x \in \mathcal{X}} \quad F(x) \defeq f(A x) + g(x) + h(x),
\end{equation}
where $A : \mathcal{X} \to \mathcal{Y}$ is a linear operator that is bounded with respect to the standard operator norm, $\|\cdot\|_{\textnormal{op}}$, induced by $\|\cdot\|$; $f,g,h : \mathcal{X} \to \mathbb{R} \cup \{+\infty\}$ are convex, proper, lower semicontinuous functions; and $h$ has a Lipschitz continuous gradient with constant $L$. We further assume that $f$ and $g$ are ``simple'' functions, in the sense that their proximal operators (see \eqref{eq:prox}) are efficiently computable. This well-studied class of problems often arises in image processing and machine learning applications, including regularized regression problems such as LASSO and the elastic net \cite{lasso,elasticnet}, and image denoising and deblurring using total variation regularization and its variants \cite{cpacta} A particular difficulty of this class of problems is that even when the proximal operators of $f$ and $g$ are easy to compute, it is generally difficult to compute the proximal operator of $f \circ A$ and $f \circ A + g$, rendering many proximal methods inapplicable.

Instead of solving problem \eqref{eq:main} directly, it is common to solve the following saddle-point problem:
\begin{equation}
    \label{eq:sp}
    \min_{x \in \mathcal{X}} \max_{y \in \mathcal{Y}} \quad \langle A x, y \rangle - f^*(y) + g(x) + h(x).
\end{equation}
Here, $f^*$ is the convex conjugate of $f$, defined as $f^*(y) \defeq \sup_z \langle y,z \rangle - f(z)$. Under mild constraint qualifications \cite{combettepesquet}, the set of optimal primal variables in \eqref{eq:sp} is the solution set of \eqref{eq:main}.
We outline some popular algorithms for solving \eqref{eq:main}, or special cases of \eqref{eq:main}, below.

\paragraph{Accelerated proximal gradient descent (APGD) \cite{lincoup, fista,chambolle2015convergence}} Proximal gradient descent methods \cite{combettes2005signal, combettes2011proximal}, particularly the accelerated methods APGD minimises objectives that are the sum of a non-smooth function with a computable proximal operator and a smooth function with a Lipschitz continuous gradient. Assuming the proximal operator of $f \circ A$ cannot be computed efficiently, APGD is only applicable to \eqref{eq:main} when $f$ is smooth. In this case, we absorb $f \circ A$ into the smooth function $h$. APGD is a theoretically strong choice for solving this special case of \eqref{eq:main} because it achieves optimal convergence rates 
of $\mathcal{O}(L / T^2)$ (where $T$ is the number of iterations) without strong convexity and $\mathcal{O}( (1 + 1/\sqrt{\kappa_P})^{-T})$ (where $\kappa_P \defeq L / \mu_g$ is the condition number associated with this ``primal'' problem) when the objective is $\mu_g$-strongly convex \cite{lincoup, fista, chambolle2015convergence}.

\paragraph{Condat--V\~u \cite{CPrates,Condat2013,Vu2013condatvu}} The Condat--V\~u algorithm applies to \eqref{eq:sp} in its full generality. When $h \equiv 0$, the Condat--V\~u algorithm reduces to the Chambolle--Pock algorithm. However, when $f \circ A \equiv 0$, the Condat--V\~u algorithm does not recover APGD, but reduces to the non-accelerated variant, proximal gradient descent. This is reflected in the algorithm's performance. For large $\|A\|_{\textnormal{op}}$ and $L$, the Condat--V\~u algorithm's performance is strictly worse than APGD and the Chambolle--Pock algorithm, converging like $\mathcal{O}(\frac{L + \|A\|_{\textnormal{op}}}{T})$, $\mathcal{O}(\frac{L + \|A\|_{\textnormal{op}}^2}{\mu_g T^2})$, and $\mathcal{O}( (1 + \sqrt{\kappa_{PD}} )^{-T} + (1 + \kappa_P)^{-T} )$, in the general, strongly convex, and strongly convex and smooth settings, respectively \cite{CPrates}.

\paragraph{PDHG \cite{chambollepock}} The \emph{Primal-Dual Hybrid Gradient (PDHG)} algorithm, or the \emph{Chambolle--Pock} algorithm solves the saddle-point problem \eqref{eq:sp} and is only applicable when $h \equiv 0$. In this setting, it is also a theoretically strong choice, achieving optimal convergence rates of $\mathcal{O}(\|A\|_{\textnormal{op}}^2 / T)$ when both functions are non-smooth and non-strongly convex, $\mathcal{O}(\frac{\|A\|_{\textnormal{op}}^2}{\mu_g T^2})$ when $g$ is $\mu_g$-strongly convex, and $\mathcal{O}((1 + 1/\sqrt{\kappa_{PD}})^{-T})$ when $g$ is strongly convex and $f^*$ is $\mu_{f^*}$-strongly convex (where $\kappa_{PD} \defeq \|A\|_{\textnormal{op}}^2 / (\mu_g \mu_{f^*})$ is the condition number associated with the \emph{primal-dual} problem).\footnote{$f^*$ being $\mu_{f^*}$-strongly convex is equivalent to $f$ being differentiable with a $1/\mu_{f^*}$-Lipschitz continuous gradient, in which case the Lipschitz constant of $f \circ A$ is less than $\|A\|_{\textnormal{op}}^2 / \mu_{f^*}$. This makes the relationship between $\kappa_{PD}$ and $\kappa_P$ clear. Both condition numbers are of the form $L' / \mu'$ for relevant Lipschitz constant $L'$ and strong-convexity parameter $\mu'$. $\kappa_P$ is the condition number of $h + g$, and $\kappa_{PD}$ is the condition number of $f \circ A + g$.} Each of these rates come from the original work of Chambolle and Pock using parameter settings that essentially optimize the theoretical asymptotic convergence rates \cite{chambollepock}.

\paragraph{Accelerated primal-dual (APD) \cite{chen13} and extensions \cite{zhao21,zhao19}} Recognising the suboptimal performance of the Condat--V\~u algorithm, Chen \emph{et al.}\! developed the APD algorithm, which incorporates momentum into the Condat--V\~u algorithm with $g \equiv 0$. APD achieves the optimal convergence rate of $\mathcal{O}(L / T^2 + \|A\|_{\textnormal{op}} / T)$ in the general setting, but its applicability is limited because it requires $g \equiv 0$. Due to this restriction, the algorithm is not known to admit accelerated convergence rates when strong convexity or additional smoothness is present. Zhao \emph{et al.}\! \cite{zhao19} propose a similar method, the \emph{optimal primal-dual hybrid gradient (OTPDHG)} algorithm, that uses stochastic gradient estimates and achieves the same convergence rates as APD even with non-trivial $g$, but the authors do not analyze the case where strong convexity or additional smoothness is present. In work concurrent to this manuscript, Zhao generalises OTPDHG to solve a larger class of saddle-point problems. Zhao proves that the resulting algorithm achieves optimal convergence rates on the bilinear saddle-point problem in \eqref{eq:sp} when all functions are convex, and when $g$ is additionally strongly convex. Zhao's approach requires a restart scheme to achieve an optimal convergence rate of $\mathcal{O}(\tfrac{L}{\mu_g (1 + \sqrt{\kappa_P})^T} + \tfrac{\|A\|_{\textnormal{op}}^2}{\mu_g T^2})$ in the strongly convex setting, and Zhao does not consider the case where $f$ is smooth\footnote{One may argue that for this regime we could use directly the restarted FISTA/APGD algorithm (considering $f+h$ as the smooth part) to achieve this rate in practice hence we do not need to study this case. We dispute against this argument, since there are cases where $1/\mu_{f^\star}$ is huge (which means the objective has only very limited smoothness), and for this case the primal methods FISTA/APGD will need to take a tiny step-size all the way to ensure convergence, while we can use a two-stage scheme for the step-sizes of ACV, switching from the ``SC" rate to the ``SC and Smooth" rate, enabling large step-sizes in early iterations and hence faster convergence. Hence we insist that the accelerated Condat-V\~u scheme still has an important role to play in such a regime.}.

\paragraph{Our contribution: Accelerated Condat--V\~u (ACV)} This work introduces the Accelerated Condat--V\~u algorithm (ACV), which incorporates momentum into the Condat--V\~u algorithm. ACV achieves \emph{optimal convergence rates}, in a sense we specify below.
\begin{itemize}
    \item In the general case, ACV converges like $\mathcal{O}(L / T^2 + \|A\|_{\textnormal{op}} / T)$. There exists a smooth, convex function $h$ such that no first-order method can minimize $h$ at a rate exceeding $\mathcal{O}(L/T^2)$ \cite{nest2004}. Furthermore, there exists $b \in \mathcal{Y}$ and $A$ such that no first-order method can solve the problem $\min_{x \in \mathcal{X}} \max_{y \in \mathcal{Y}} \langle A x - b, y \rangle$ at a rate exceeding $\mathcal{O}(\|A\|_{\textnormal{op}} / T)$ \cite{chen13,ouyang_comp}.
    \item When $g$ (or $h$)\footnote{Strong convexity can be ``transfered'' between $g$ and $h$ by adding and subtracting $\tfrac{\mu_g}{2} \|x\|^2$ without complicating any computations.} is $\mu_g$-strongly convex, ACV converges like $\mathcal{O}(\tfrac{L}{\mu_g (1 + \sqrt{\kappa_P})^T} + \tfrac{\|A\|_{\textnormal{op}}^2}{\mu_g T^2} )$. There exists a smooth, strongly convex function that no first-order algorithm can minimize at a rate faster than $\mathcal{O} ((1 + \sqrt{\kappa_P})^{-T})$ \cite{nemiyudin}, and there exists a $b \in \mathcal{Y}$ such that no first-order algorithm can solve $\min_{x \in \mathcal{X}} \max_{y \in \mathcal{Y}} \langle K x - b, y \rangle$ at a rate exceeding $\mathcal{O}(\tfrac{\|A\|_{\textnormal{op}}^2}{\mu_g T^2})$ \cite{ouyang_comp, zhao21}.
    \item When $g$ (or $h$) is $\mu_g$-strongly convex and $f$ is $1/\mu_{f^*}$-smooth, then ACV achieves a convergence rate of $\mathcal{O}((1 + \kappa_{PD}^{-1/2} + \kappa_P^{-1/2})^{-T})$. Given that the condition number of the objective $f(Ax) + h(x)$ is $\kappa_{PD} + \kappa_P$, it is clear from the previous point that no first-order algorithm can improve on this convergence rate in general.
\end{itemize}
As the ratio $L / \|A\|_{\textnormal{op}}$ increases, ACV interpolates between the Chambolle--Pock algorithm and APGD, reducing to the accelerated variant of proximal gradient descent when $f \circ A = 0$ and achieving optimal convergence rates. Because the Condat--V\~u algorithm interpolates between the Chambolle--Pock algorithm and non-accelerated proximal gradient descent, ACV significantly outperforms the Condat--V\~u algorithm in theory and practice. This is especially true in the regime $L \gg \|A\|_{\textnormal{op}}$, where ACV behaves like APGD, while the Condat--V\~u algorithm behaves like the non-accelerated variant, PGD. Algorithm \ref{alg:framework} shows the general update rule for ACV. It reduces to the Condat--V\~u algorithm when $\alpha_k = 1$. While the general structure of ACV is similar to the algorithm proposed in \cite{zhao19}, different parameter settings, warm-up techniques, and analytical tools are required to prove the state-of-the-art convergence rates presented in this work and achieve them in practice. We show that ACV converges for a wide range of parameters that include the parameters suggested in \cite{zhao19}. We also show that ACV achieves optimal convergence rates when $f$ is smooth, which is a case not considered in other works \cite{zhao19,zhao21}, and ACV achieves optimal rates in the strongly convex setting without requiring a restart scheme as in \cite{zhao21}.

Table \ref{tab:rates1} summarises these algorithms and their convergence rates. Developing an algorithm that achieves optimal convergence rates over the entire class of problems represented by \eqref{eq:main} is not solely for theoretical interest. In many applications of the Condat--V\~u algorithm, $L \gg \|A\|_{\textnormal{op}}$, forcing the Condat--V\~u algorithm to operate in the regime where its theoretical performance is furthest from optimal. Popular examples include regression problems with composite regularizers, often formulated as
\begin{equation}
    \min_{x \in \mathcal{X}} \quad \frac{1}{2} \|W x - y\|^2 + \lambda_1 \mathcal{R}_1(x) + \lambda_2 \mathcal{R}_2(x).
\end{equation}
%
%
Here, $y$ is a vector of labels, $W$ is a data matrix of features, and $\mathcal{R}_1, \mathcal{R}_2$ are convex, possibly non-smooth regularizers with tuning parameters $\lambda_1, \lambda_2$. The elastic net regularizer for feature selection falls into this class of problems with $\mathcal{R}_1(x) = \|x\|_1$ and $\mathcal{R}_2(x) = \tfrac{1}{2}\|x\|^2$. Another example is LASSO with a non-negative constraint, where $\mathcal{R}_2$ is the indicator function of the non-negative orthant. For these problems, the natural choice $h(x) = \tfrac{1}{2} \|W x - y\|^2$ leads to a very large $L = \|W\|_{\textnormal{op}}^2$, making the Condat--V\~u algorithm extremely inefficient, as we demonstrate in our numerical experiments in Section \ref{sec:ex}


\begin{table}[!t]
\begin{minipage}{\textwidth}
\footnotesize
\centering
\begin{tabular}{|p{1.7cm}| C{1.55cm} | C{1.6cm} | C{1.5cm}| C{3.1cm} | C{3.1cm} | C{2.2cm} |}
\hline
& Restriction & Easy-to-tune? & Convex & Strongly Convex (SC) & SC and Smooth \\
\hline
APGD \cite{lincoup} & $f \circ A \equiv 0$ & {Yes} & $ \frac{L}{T^2} $ & $ \frac{L}{\mu_g (1 + \kappa_P^{-1/2} )^{T}} $ & --- \\
\hline
PDHG \cite{chambollepock} & $h \equiv 0$ & {Yes} &  $ \frac{\|A\|_{\textnormal{op}}}{T} $ & $ \frac{\|A\|_{\textnormal{op}}^2}{\mu_g T^2} $ & $ (1 + \kappa_{PD}^{-1/2} )^{-T} $ \\
\hline
Zhao \emph{et al.}\! \cite{zhao19} & None & {No} & $ \frac{L}{T^2} + \frac{\|A\|_{\textnormal{op}}}{T} $ & --- & --- \\
\hline
Zhao \! \cite{zhao21} & None & {No} & $ \frac{L}{T^2} + \frac{\|A\|_{\textnormal{op}}}{T} $ & $ \frac{L}{\mu_g (1 + \kappa_P^{-1/2})^{T}} + \frac{\|A\|_{\textnormal{op}}^2}{\mu_g T^2} $ & --- \\
\hline
Condat--V\~u \cite{CPrates,Condat2013,Vu2013condatvu} & None & {Yes} & $ \frac{L + \|A\|_{\textnormal{op}}}{T} $ & $ \frac{L + \|A\|_{\textnormal{op}}^2}{\mu_g T^2} $ & $ (1 + \kappa_{PD}^{-1/2} + \kappa_P^{-1} )^{-T} $ \\
\hline
\textbf{ACV (Ours)} & None & {Yes} & $ \frac{L}{T^2} + \frac{\|A\|_{\textnormal{op}}}{T} $ & $ \frac{L}{\mu_g (1 + \kappa_P^{-1/2})^{T}} + \frac{\|A\|_{\textnormal{op}}^2}{\mu_g T^2} $ & $ (1 + \kappa_{PD}^{-1/2} + \kappa_P^{-1/2} )^{-T} $ \\
\hline
\end{tabular}
\caption{%
Best-known convergence rates for solving problems of the form \eqref{eq:main} up to constant factors. ``Convex'' refers to the general setting of \eqref{eq:main}, ``Strongly convex'' implies $g$ or $h$ is $\mu_g$-strongly convex, and ``smooth'' implies $f$ has a $1/\mu_{f^*}$-Lipschitz continuous gradient. $\kappa_{PD} = \tfrac{\|A\|^2_{\textnormal{op}}}{\mu_{f^*} \mu_g}$ and $\kappa_P = L/\mu_g$ are condition numbers. 
These rates are with respect to different quantities; please see the associated works for further details. The column ``Practicality" indicates the difficulty of finding suitable choices for the algorithmic parameters, as well as the range of applications. All convergence rates are displayed assuming parameter settings that approximately optimize theoretical convergence rates. ACV interpolates between the optimal rates of APGD and PDHG.}
\label{tab:rates1}
\end{minipage}
\end{table}


\begin{center}
\begin{minipage}{0.975\linewidth}
\begin{algorithm}[H]
\caption{Accelerated Condat--V\~u Framework: {  For $\primal_0, \momentumtwo_0 \in \mathcal{X}$, $\dualit_0 \in \mathcal{Y}$, and $\gamma_k, \tau_k, \alpha_k, \theta_k \in \mathbb{R}$ for all $k \in \{0,1,\cdots,T_{\max} - 1\}$, define ACV$(\primal_0,\dualit_0,\momentumtwo_0, \{\gamma_k\},\{\taustep_k\},\{\alpha_k\},$ $\{\theta_k\},T_{\max})$ as the following:}}
\label{alg:framework}
  \begin{algorithmic}[1]
  \State \textbf{For} $k = 0,1,\cdots, T_{\max} - 1$:
  \State $\momentumone_{k+1} \leftarrow \alpha_k \primal_k + (1 - \alpha_k) \momentumtwo_k$
    \State $\dualit_{k+1} \leftarrow \operatorname{prox}_{\gamma_k f^*} ( \dualit_k + \gamma_k A ( \primal_k + \theta_k (\primal_k - \primal_{k-1}) ) )$
    %
    %
    \State $\primal_{k+1} \leftarrow \operatorname{prox}_{\taustep_k g} ( \primal_k - \taustep_k \nabla h(\momentumone_{k+1}) - \taustep_k A^* \dualit_{k+1})$
    \State $\momentumtwo_{k+1} \leftarrow \alpha_k \primal_{k+1} + (1 - \alpha_k) \momentumtwo_k$.
    \State \textbf{end for}
  \end{algorithmic}
\end{algorithm}
\end{minipage}
\end{center}

\section{Related Work}

In addition to the works mentioned in the previous section, there are several primal-dual algorithms that apply to \eqref{eq:sp} or its special cases. Some of these algorithms offer minor improvements over the convergence rates of the Condat--V\~u algorithm, but only up to a constant factor, so we use the rates of the Condat--V\~u algorithm to represent all of them in Table \ref{tab:rates1}.

The Davis--Yin splitting algorithm is a method for finding the zero of the sum of three maximally monotone operators, generalizing popular two-operator splitting methods such as forward-backward splitting.
Many algorithms for minimizing the sum of three functions are special cases of the Davis--Yin splitting algorithm, including PD3O \cite{pd3o}, PDDY \cite{Salim2020condatvu}, and Condat--V\~u \cite{Condat2013,Vu2013condatvu}. PD3O and PDDY exhibit essentially the same convergence rates as Condat--V\~u.

The asymmetric forward-backward-adjoint (AFBA) algorithm, { which is intrinsically the same as Davis--Yin splitting as pointed out by Condat et al \cite{condat2022randprox}}, finds the zero of the sum of three maximally monotone operators, and is hence applicable to solve problems of the form \eqref{eq:main}.
%
%
Primal-dual fixed-point (PDFP) \cite{pdfp} is another algorithm to solve problems of the form \eqref{eq:sp} in its full generality. Unlike the other algorithms we have mentioned, PDFP requires two evaluations of the proximal operator of $g$ per iteration, which can make it slightly more expensive in practice.{ Moreover, recently there are a serge of works on acceleration of fixed-point iterations, starting from the work of Kim \cite{kim2021accelerated} on accelerated proximal-point schemes and the work of Yoon et al \cite{yoon2021accelerated} on extra anchored gradient achieving $O(1/K^2)$ rate on the squared gradient norm, to the work of  Bot et al \cite{bot2022fast} on the acceleration of Krasnosel'skii-Mann iteration and Park et al \cite{park2022exact} for more general fixed-point acceleration schemes.}

Many other popular primal-dual algorithms are special cases of PDFP, AFBA, PD3O, and the Condat--V\~u algorithm \cite{condat2023proximal}. The preconditioned alternating projection algorithm (PAPA) \cite{papa} is equivalent to PDFP when $g$ is the indicator function of a convex set. If $g \equiv 0$, then PDFP, AFBA, and PD3O all reduce to the proximal alternating predictor-corrector (PAPC) algorithm \cite{papc}. PD3O and Condat--V\~u reduce to the Chambolle--Pock algorithm when $h \equiv 0$. Figure \ref{fig:alg} summarises these algorithms and their relationships in Appendix \ref{app:fig}.

As Figure \ref{fig:alg} shows, most of these algorithms reduce to (non-accelerated) proximal gradient descent when $f \circ A \equiv 0$, preventing them from achieving optimal convergence rates. The only exception is PDHG, which achieves optimal convergence rates. 
Although the focus of this manuscript is accelerating the Condat--V\~u algorithm, it is likely that similar techniques can be used to accelerate PDDY and PD3O as well, leading to accelerated variants of each of the algorithms in the tree of Figure \ref{fig:alg}. 

Recent work by Salim \emph{et al.}\! takes a different approach to accelerating primal-dual algorithms. This work develops a framework for incorporating stochastic gradient estimators, such as the SVRG estimator \cite{svrg}, into several primal-dual algorithms, including Condat--V\~u \cite{Salim2020condatvu} and PDDY. Because these stochastic estimates of the gradient are cheaper to compute than the full gradient, the proposed algorithms have a smaller overall complexity cost than their full-gradient analogues. { In certain applications, one could also consider randomizing the proximal operator $\mathrm{prox}_f$ instead of the gradients $\nabla h$ as pointed out by Condat and Rickt\'arik \cite{condat2022randprox}.} None of these algorithms incorporate acceleration techniques, so although they have a reduced per-iteration complexity, they exhibit the same convergence rates as their full-gradient counterparts.

\section{Preliminaries}

In this section, we provide the background necessary for our analysis of Algorithm \ref{alg:framework}.

The \emph{Bregman divergence} \cite{bregman} associated with a function $g$ is defined as
\begin{equation}
    D^\xi_g(u,x) \defeq g(u) - g(x) - \langle \xi, u - x \rangle,
\end{equation}
where $\xi \in \partial g(x)$ and $\partial$ is the subdifferential operator. The function $g$ is convex if and only if $D^\xi(u,x) \ge 0$ for all $x,u \in \mathcal{X}$. If $g$ is strictly convex, then $D^\xi_g(u,x) = 0$ implies $x = y$.

Some of our convergence guarantees are stated as bounds on the \emph{(partial) primal-dual gap}. Define the Lagrangian
\begin{equation}
    \mathcal{L}(x,y) \defeq \langle A x, y \rangle - f^*(y) + g(x) + h(x)
\end{equation}
%
The primal-dual gap at $(x,y) \in \mathcal{X} \times \mathcal{Y}$ is defined as
%
%
\begin{equation}
    PD_{\mathcal{B}_1 \times \mathcal{B}_2}(x,y) \defeq \max_{y' \in \mathcal{B}_2} \mathcal{L}(x,y') - \min_{x' \in \mathcal{B}_1} \mathcal{L}(x',y),
\end{equation}
where we take the set $\mathcal{B}_1 \times \mathcal{B}_2 \subset \mathcal{X} \times \mathcal{Y}$ to be a compact set containing a saddle-point solution to \eqref{eq:sp}. The primal-dual gap is non-negative and equal to zero only at saddle-points of \eqref{eq:sp}.


A function $g$ is $\mu_g$-\emph{strongly convex} if and only if
\begin{equation}
\label{eq:defstrong}
    \frac{\mu_g}{2} \|u - x\|^2 \le D^\xi_g(u,x), 
\end{equation}
for all $x,u \in \mathcal{X}$. If $g$ is $\mu_g$-strongly convex, then its convex conjugate $g^*$ has a $1/\mu_g$-Lipschitz continuous gradient \cite{cpacta}. If a function $h$ has an $L$-Lipschitz continuous gradient, then \cite[Thm. 2.1.5]{nest2004}
\begin{equation}
    D^{\zeta}_h(u,x) \le \frac{L}{2} \|x - u\|^2.
\end{equation}

The \emph{proximal operator} is defined as
\begin{equation}
\label{eq:prox}
\prox_{g} (u) = \argmin_{x} \quad \frac{1}{2} \|x-u\|^2 + g(x).
\end{equation}
The proximal operator is also defined implicitly as $u - \prox_g(u) \in \partial g(\prox_g(u))$, which follows from the optimality conditions of the problem in \eqref{eq:prox}.

The following two lemmas establish a fundamental inequality on the duality gap at iterates of Algorithm \ref{alg:framework}. In later sections, we adapt this inequality to prove convergence rates in the general setting, and in the presence of strong convexity and additional smoothness in the objective.

\begin{lemma}
\label{lem:cvinit}
Suppose $g$ is $\mu_g$-strongly convex and $f^*$ is $\mu_{f^*}$-strongly convex with $\mu_g, \mu_{f^*}$ $\ge 0$. The {  iterates of Algorithm \ref{alg:framework} satisfy the following inequality} for any $(x,y) \in \mathcal{X} \times \mathcal{Y}$:
\begin{equation}
\begin{aligned}
    & \gamma_k \big( f^*(\dualit_{k+1}) - f^*(y) + g(\primal_{k+1}) - g(x) + \langle A \primal_{k+1}, y \rangle - \langle A x, \dualit_{k+1} \rangle \big) \\
    \le & \frac{\gamma_k}{2 \taustep_k} (\|\primal_k - x\|^2 - (1+\mu_g \taustep_k) \|\primal_{k+1} - x\|^2 - \|\primal_{k+1} - \primal_k\|^2) + \frac{1}{2} \|\dualit_k - y\|^2 \\
    & \quad - \frac{1 + \mu_{f^*} \gamma_k}{2} \|\dualit_{k+1} - y\|^2 + \frac{\theta_k^2 \gamma_k^2 \|A\|_{\textnormal{op}}^2}{2} \|\primal_k - \primal_{k-1}\|^2 - \gamma_k \langle \nabla h(\momentumone_{k+1}), \primal_{k+1} - x \rangle \\
    & \quad + \gamma_k \theta_k \langle A (\primal_k - \primal_{k-1}), \dualit_k - y \rangle - \gamma_k \langle A (\primal_{k+1} - \primal_k), \dualit_{k+1} - y \rangle
\end{aligned}
\end{equation}
\end{lemma}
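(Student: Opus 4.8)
The plan is to bound the two convex-function differences $\gamma_k\big(f^*(\dualit_{k+1}) - f^*(y)\big)$ and $\gamma_k\big(g(\primal_{k+1}) - g(x)\big)$ separately from the optimality conditions of the two proximal steps, and then to add them together, carefully accounting for the bilinear coupling terms $\langle A\primal_{k+1}, y\rangle - \langle Ax, \dualit_{k+1}\rangle$. The gradient term $\nabla h(\momentumone_{k+1})$ will simply be carried along untouched here, since it enters the primal update only through the proximal argument; exploiting its smoothness and the averaging structure of $\momentumone_{k+1}$ is deferred to the later lemmas.

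First I would analyse the dual step. Writing $\bar\primal_k \defeq \primal_k + \theta_k(\primal_k - \primal_{k-1})$, the implicit characterisation $w - \prox_{\gamma_k f^*}(w) \in \gamma_k\,\partial f^*(\prox_{\gamma_k f^*}(w))$ applied to line~3 shows that $\eta_{k+1} \defeq \tfrac{1}{\gamma_k}(\dualit_k - \dualit_{k+1}) + A\bar\primal_k \in \partial f^*(\dualit_{k+1})$. Plugging this subgradient into the strong-convexity inequality \eqref{eq:defstrong} for $f^*$, namely $D_{f^*}^{\eta_{k+1}}(y,\dualit_{k+1}) \ge \tfrac{\mu_{f^*}}{2}\|y - \dualit_{k+1}\|^2$, and rearranging gives
\begin{equation*}
\gamma_k\big(f^*(\dualit_{k+1}) - f^*(y)\big) \le \langle \dualit_k - \dualit_{k+1}, \dualit_{k+1} - y\rangle + \gamma_k\langle A\bar\primal_k, \dualit_{k+1} - y\rangle - \tfrac{\gamma_k\mu_{f^*}}{2}\|y - \dualit_{k+1}\|^2.
\end{equation*}
The three-point identity $\langle a-b, b-c\rangle = \tfrac12(\|a-c\|^2 - \|b-c\|^2 - \|a-b\|^2)$ turns the first inner product into $\tfrac12\|\dualit_k - y\|^2 - \tfrac12\|\dualit_{k+1}-y\|^2 - \tfrac12\|\dualit_{k+1}-\dualit_k\|^2$. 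I would treat the primal step symmetrically: the optimality condition for line~5 yields $\tfrac{1}{\taustep_k}(\primal_k - \primal_{k+1}) - \nabla h(\momentumone_{k+1}) - A^*\dualit_{k+1} \in \partial g(\primal_{k+1})$, and inserting this into the strong-convexity inequality for $g$ together with the same three-point identity produces an upper bound for $\gamma_k\big(g(\primal_{k+1}) - g(x)\big)$ containing $\tfrac{\gamma_k}{2\taustep_k}\big(\|\primal_k - x\|^2 - \|\primal_{k+1}-x\|^2 - \|\primal_{k+1}-\primal_k\|^2\big)$, the term $-\tfrac{\gamma_k\mu_g}{2}\|x - \primal_{k+1}\|^2$, the gradient term $-\gamma_k\langle\nabla h(\momentumone_{k+1}), \primal_{k+1}-x\rangle$, and the coupling $-\gamma_k\langle A^*\dualit_{k+1}, \primal_{k+1}-x\rangle$. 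Combining the primal strong-convexity term with $-\tfrac{\gamma_k}{2\taustep_k}\|\primal_{k+1}-x\|^2$ gives the factor $(1+\mu_g\taustep_k)$ appearing in the statement.

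The main obstacle is the extrapolation term produced by $\bar\primal_k$. Splitting $A\bar\primal_k = A\primal_k + \theta_k A(\primal_k - \primal_{k-1})$, the extrapolated part is $\gamma_k\theta_k\langle A(\primal_k - \primal_{k-1}), \dualit_{k+1} - y\rangle$, whereas the statement requires this to be anchored at $\dualit_k$ (so that it telescopes in the later analysis). I would therefore write $\dualit_{k+1} - y = (\dualit_k - y) + (\dualit_{k+1} - \dualit_k)$ and bound the residual with Young's inequality and the operator-norm bound,
\begin{equation*}
\gamma_k\theta_k\langle A(\primal_k - \primal_{k-1}), \dualit_{k+1} - \dualit_k\rangle \le \tfrac{\theta_k^2\gamma_k^2\|A\|_{\textnormal{op}}^2}{2}\|\primal_k - \primal_{k-1}\|^2 + \tfrac12\|\dualit_{k+1} - \dualit_k\|^2,
\end{equation*}
so that the $+\tfrac12\|\dualit_{k+1}-\dualit_k\|^2$ term cancels exactly the $-\tfrac12\|\dualit_{k+1}-\dualit_k\|^2$ left over from the dual three-point identity. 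Finally I would add the two bounds and append the target coupling $\gamma_k(\langle A\primal_{k+1},y\rangle - \langle Ax,\dualit_{k+1}\rangle)$ to both sides; using $\langle A^*\dualit_{k+1}, \primal_{k+1}-x\rangle = \langle A\primal_{k+1},\dualit_{k+1}\rangle - \langle Ax,\dualit_{k+1}\rangle$, the remaining bilinear terms $\gamma_k\langle A\primal_k, \dualit_{k+1}\rangle$, $-\gamma_k\langle A\primal_{k+1}, \dualit_{k+1}\rangle$, and $\gamma_k\langle A(\primal_{k+1}-\primal_k), y\rangle$ collapse to $-\gamma_k\langle A(\primal_{k+1}-\primal_k), \dualit_{k+1}-y\rangle$, which is precisely the last term of the claim. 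Matching the surviving quadratic and inner-product terms against the stated inequality then completes the argument; the only real care required is the sign bookkeeping of the bilinear cancellation and ensuring the Young residual is absorbed rather than dropped.
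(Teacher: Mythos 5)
Your proposal is correct and follows essentially the same route as the paper's proof: the same proximal-step subgradient characterisations combined with the strong-convexity inequality \eqref{eq:defstrong}, the same three-point identity for the inner products, and the same splitting of the extrapolated coupling term with $\dualit_{k+1} - y = (\dualit_k - y) + (\dualit_{k+1} - \dualit_k)$ followed by Young's inequality on the residual. The only cosmetic difference is that the paper introduces a free Young parameter $\epsilon$ and then sets $\epsilon = \theta_k$, whereas you apply Young's inequality directly with the balanced weights so that the $\tfrac{1}{2}\|\dualit_{k+1} - \dualit_k\|^2$ terms cancel exactly --- the same choice the paper ultimately makes.
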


\begin{proof}
By the definition of $\primal_{k+1}$ and the definition of the proximal operator, $1/\taustep_k (\primal_k - \taustep_k \nabla h(\momentumone_{k+1}) - \taustep_k A^* \dualit_{k+1} - \primal_{k+1}) \in \partial g(\primal_{k+1})$. Combining this fact with \eqref{eq:defstrong},
\begin{equation}
\begin{aligned}
    & \gamma_k \big( g(\primal_{k+1}) - g(x) \big) \\
    \le & \frac{\gamma_k}{\taustep_k} \langle \primal_k - \taustep_k \nabla h(\momentumone_{k+1}) - \taustep_k A^* \dualit_{k+1} - \primal_{k+1}, \primal_{k+1} - x \rangle - \frac{\mu_g \gamma_k}{2} \|\primal_{k+1} - x\|^2 \\
    =& \frac{\gamma_k}{\taustep_k} \langle \primal_k - \primal_{k+1}, \primal_{k+1} - x \rangle - \gamma_k \langle \nabla h(\momentumone_{k+1}), \primal_{k+1} - x \rangle \\
    & \quad - \gamma_k \langle A \primal_{k+1}, \dualit_{k+1} \rangle + \gamma_k \langle A x, \dualit_{k+1} \rangle - \frac{\mu_g \gamma_k}{2} \|\primal_{k+1} - x\|^2 \\
    =& \frac{\gamma_k}{\taustep_k} \langle \primal_k - \primal_{k+1}, \primal_{k+1} - x \rangle - \gamma_k \langle \nabla h(\momentumone_{k+1}), \primal_{k+1} - x \rangle \\
    & \quad - \gamma_k \langle A \primal_{k+1}, \dualit_{k+1} - y \rangle - \gamma_k \langle A \primal_{k+1}, y \rangle + \gamma_k \langle A x, \dualit_{k+1} \rangle \\
    &\quad - \frac{\mu_g \gamma_k}{2} \|\primal_{k+1} - x\|^2.
\end{aligned}
\end{equation}
Similarly, $1/\gamma_k (\dualit_k - \dualit_{k+1} + \gamma_k A(\primal_k + \theta_k (\primal_k - \primal_{k-1}) ) ) \in \partial f^*(\dualit_{k+1})$, allowing us to say
\begin{equation}
\begin{aligned}
    & \gamma_k \big( f^*(\dualit_{k+1}) - f^*(y) \big) \\
    \le & \langle \dualit_k + \gamma_k A (\primal_k + \theta_k (\primal_k - \primal_{k-1}) ) - \dualit_{k+1}, \dualit_{k+1} - y \rangle - \frac{\mu_{f^*} \gamma_k}{2} \|\dualit_{k+1} - y\|^2 \\
    = & \langle \dualit_k - \dualit_{k+1}, \dualit_{k+1} - y \rangle + \gamma_k \langle A (\primal_k + \theta_k (\primal_k - \primal_{k-1}) ), \dualit_{k+1} - y \rangle \\
    & \quad - \frac{\mu_{f^*} \gamma_k}{2} \|\dualit_{k+1} - y\|^2.
\end{aligned}
\end{equation}
Adding these inequalities, we have
\begin{equation}
\begin{aligned}
    & \gamma_k \big( f^*(\dualit_{k+1}) - f^*(y) + g(\primal_{k+1}) - g(x) + \langle A \primal_{k+1}, y \rangle - \langle A x, \dualit_{k+1} \rangle \big) \\
    \le & \frac{\gamma_k}{\taustep_k} \langle \primal_k - \primal_{k+1}, \primal_{k+1} - x \rangle + \langle \dualit_k - \dualit_{k+1}, \dualit_{k+1} - y \rangle \\
    & \quad + \gamma_k \langle A (\primal_k + \theta_k (\primal_k - \primal_{k-1}) ), \dualit_{k+1} - y \rangle - \gamma_k \langle \nabla h(\momentumone_{k+1}), \primal_{k+1} - x \rangle \\
    &\quad - \gamma_k \langle A \primal_{k+1}, \dualit_{k+1} - y \rangle - \frac{\mu_g \gamma_k}{2} \|\primal_{k+1} - x\|^2 - \frac{\mu_{f^*} \gamma_k}{2} \|\dualit_{k+1} - y\|^2.
\end{aligned}
\end{equation}
We simplify the first two terms on the right of this inequality using the relationship
\begin{equation}
\begin{aligned}
    \langle a - b, b - c \rangle = \frac{1}{2} \|a - c\|^2 - \frac{1}{2} \|b - c\|^2 - \frac{1}{2} \|a - b\|^2.
\end{aligned}
\end{equation}
We also bound the third and fifth terms using Young's inequality.
\begin{equation}
\begin{aligned}
    & \gamma_k \langle A (\primal_k + \theta_k (\primal_k - \primal_{k-1}) ), \dualit_{k+1} - y \rangle - \gamma_k \langle A \primal_{k+1}, \dualit_{k+1} - y \rangle \\
    = & \gamma_k \theta_k \langle A (\primal_k - \primal_{k-1}), \dualit_{k+1} - y \rangle - \gamma_k \langle A (\primal_{k+1} - \primal_k), \dualit_{k+1} - y \rangle \\
    = & \gamma_k \theta_k \langle A (\primal_k - \primal_{k-1}), \dualit_k - y \rangle - \gamma_k \langle A (\primal_{k+1} - \primal_k), \dualit_{k+1} - y \rangle \\
    & \quad + \gamma_k \theta_k \langle A (\primal_k - \primal_{k-1}), \dualit_{k+1} - \dualit_k \rangle \\
    \le & \gamma_k \theta_k \langle A (\primal_k - \primal_{k-1}), \dualit_k - y \rangle - \gamma_k \langle A (\primal_{k+1} - \primal_k), \dualit_{k+1} - y \rangle \\
    & \quad + \frac{\theta_k \gamma_k^2 \epsilon \|A\|_{\textnormal{op}}^2}{2} \|\primal_k - \primal_{k-1}\|^2 + \frac{\theta_k}{2 \epsilon} \|\dualit_{k+1} - \dualit_k\|^2,
\end{aligned}
\end{equation}
for any $\epsilon > 0$. This produces the inequality
\begin{equation}
\begin{aligned}
    & \gamma_k \big( f^*(\dualit_{k+1}) - f^*(y) +  g(\primal_{k+1}) - g(x) + \langle A \primal_{k+1}, y \rangle - \langle A x, \dualit_{k+1} \rangle \big) \\
    \le & \frac{\gamma_k}{2 \taustep_k} (\|\primal_k - x\|^2 - (1+\mu_g \taustep_k) \|\primal_{k+1} - x\|^2 - \|\primal_{k+1} - \primal_k\|^2) + \frac{1}{2} \|\dualit_k - y\|^2 \\
    & - \frac{1 + \mu_{f^*} \gamma_k}{2} \|\dualit_{k+1} - y\|^2 - \left( \frac{1}{2} - \frac{\theta_k}{2 \epsilon} \right) \|\dualit_{k+1} - \dualit_k\|^2 + \frac{\theta_k \gamma_k^2 \epsilon \|A\|_{\textnormal{op}}^2}{2} \|\primal_k - \primal_{k-1}\|^2 \\
    & - \gamma_k \langle \nabla h(\momentumone_{k+1}), \primal_{k+1} - x \rangle + \gamma_k \theta_k \langle A (\primal_k - \primal_{k-1}), \dualit_k - y \rangle \\
    & - \gamma_k \langle A (\primal_{k+1} - \primal_k), \dualit_{k+1} - y \rangle
\end{aligned}
\end{equation}
The choice $\epsilon = \theta_k$ minimises the coefficient of $\|\primal_k - \primal_{k-1}\|^2$ while ensuring that the coefficient of $\|\dualit_{k+1} - \dualit_k\|^2$ is non-positive. This completes the proof.
\end{proof}

The following lemma introduces the smooth function, $h$, into the inequality of Lemma \ref{lem:cvinit} and bounds the duality gap.

\begin{lemma}[One-Iteration Progress]
\label{lem:cvtele}
    Let $w_{k+1} = \alpha_k \dualit_{k+1} + (1-\alpha_k) \momentumtwo_k$. The {  iterates of Algorithm \ref{alg:framework} satisfy the following inequality} for all $(x,y) \in \mathcal{X} \times \mathcal{Y}$:
    \begin{equation}
\begin{aligned}
    \label{eq:telelast}
        & \frac{\gamma_k}{\alpha_k} ( \mathcal{L}(\momentumtwo_{k+1}, y) - \mathcal{L}(x, w_{k+1}) ) \\
        \le & \frac{\gamma_k (1 - \alpha_k)}{\alpha_k} ( \mathcal{L}(\momentumtwo_k, y) - \mathcal{L}(x, w_k) ) + \frac{\gamma_k}{2 \taustep_k} \left( L \alpha_k \taustep_k - 1 \right) \|\primal_{k+1} - \primal_k\|^2 \\
        & \quad + \frac{\theta_k^2 \gamma_k^2 \|A\|_{\textnormal{op}}^2}{2} \|\primal_k - \primal_{k-1}\|^2 + \frac{\gamma_k}{2 \taustep_k} (\|\primal_k - x\|^2 - (1 + \mu_g \taustep_k) \|\primal_{k+1} - x\|^2) \\
        & \quad + \frac{1}{2} \|\dualit_k - y\|^2 - \frac{1 + \mu_{f^*} \gamma_k}{2} \|\dualit_{k+1} - y\|^2 \\
        & \quad + \gamma_k \theta_k \langle A (\primal_k - \primal_{k-1}), \dualit_k - y \rangle - \gamma_k \langle A (\primal_{k+1} - \primal_k), \dualit_{k+1} - y \rangle.
    \end{aligned}
\end{equation}
\end{lemma}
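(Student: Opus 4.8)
The plan is to upgrade Lemma \ref{lem:cvinit}---which controls only the $f^*$-, $g$-, and bilinear parts of the gap at the raw iterates $(\primal_{k+1},\dualit_{k+1})$---into a statement about the full Lagrangian gap at the \emph{averaged} iterates $(\momentumtwo_{k+1},w_{k+1})$, where I read $w_{k+1}=\alpha_k\dualit_{k+1}+(1-\alpha_k)w_k$ as the dual analogue of the recursion for $\momentumtwo$. Two ingredients drive this: convexity of $g$ and $f^*$ together with linearity of the bilinear form, which let me pass from $(\primal_{k+1},\dualit_{k+1})$ and $(\momentumtwo_k,w_k)$ to the convex combinations $\momentumtwo_{k+1}=\alpha_k\primal_{k+1}+(1-\alpha_k)\momentumtwo_k$ and $w_{k+1}$; and the descent lemma for the smooth term $h$, which supplies the missing $h$-contribution and corrects for the fact that the gradient in Algorithm \ref{alg:framework} is evaluated at the extrapolation point $\momentumone_{k+1}$ rather than at $\primal_{k+1}$.

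First I would strip off the smooth part. Writing $C_k \defeq \mathcal{L}(\momentumtwo_k,y)-\mathcal{L}(x,w_k)-(h(\momentumtwo_k)-h(x))$ and letting $B_k$ denote the bracket multiplying $\gamma_k$ on the left of Lemma \ref{lem:cvinit}, Jensen's inequality applied to $g$ and $f^*$ (the $\langle A\cdot,\cdot\rangle$, $f^*(y)$ and $g(x)$ terms being exactly affine in the weight $\alpha_k$) gives $\mathcal{L}(\momentumtwo_{k+1},y)-\mathcal{L}(x,w_{k+1})-(h(\momentumtwo_{k+1})-h(x)) \le \alpha_k B_k + (1-\alpha_k)C_k$. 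Multiplying by $\gamma_k/\alpha_k$ and inserting the bound from Lemma \ref{lem:cvinit} reproduces, term-by-term, every quadratic and inner-product expression on the right of \eqref{eq:telelast} as well as $\tfrac{\gamma_k(1-\alpha_k)}{\alpha_k}(\mathcal{L}(\momentumtwo_k,y)-\mathcal{L}(x,w_k))$, leaving behind only the $h$-discrepancy $\tfrac{\gamma_k}{\alpha_k}\big(h(\momentumtwo_{k+1})-(1-\alpha_k)h(\momentumtwo_k)-\alpha_k h(x)\big)$ and the stray gradient term $-\gamma_k\langle\nabla h(\momentumone_{k+1}),\primal_{k+1}-x\rangle$ inherited from Lemma \ref{lem:cvinit}.

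The crux is to show these two leftover $h$-terms combine into exactly $\tfrac{\gamma_k}{2\taustep_k}L\alpha_k\taustep_k\|\primal_{k+1}-\primal_k\|^2$, which merges with the $-\tfrac{\gamma_k}{2\taustep_k}\|\primal_{k+1}-\primal_k\|^2$ already present to give the coefficient $\tfrac{\gamma_k}{2\taustep_k}(L\alpha_k\taustep_k-1)$. The key algebraic observations are the momentum identities $\momentumtwo_{k+1}-\momentumone_{k+1}=\alpha_k(\primal_{k+1}-\primal_k)$ and $\momentumtwo_k-\momentumone_{k+1}=\alpha_k(\momentumtwo_k-\primal_k)$, both read off from $\momentumone_{k+1}=\alpha_k\primal_k+(1-\alpha_k)\momentumtwo_k$. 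I would apply the descent lemma $D^\zeta_h(u,x)\le\tfrac{L}{2}\|x-u\|^2$ to the ``new'' point $\momentumtwo_{k+1}$ and the convexity lower bound $D_h\ge 0$ to the ``anchor'' points $\momentumtwo_k$ and $x$, all expanded around base point $\momentumone_{k+1}$. The $h(\momentumone_{k+1})$ values cancel because $1-(1-\alpha_k)-\alpha_k=0$, and the surviving first-order terms collapse, via the identities above, to precisely $\alpha_k\langle\nabla h(\momentumone_{k+1}),\primal_{k+1}-x\rangle$. This yields $h(\momentumtwo_{k+1})-(1-\alpha_k)h(\momentumtwo_k)-\alpha_k h(x)\le\alpha_k\langle\nabla h(\momentumone_{k+1}),\primal_{k+1}-x\rangle+\tfrac{L\alpha_k^2}{2}\|\primal_{k+1}-\primal_k\|^2$; after scaling by $\gamma_k/\alpha_k$ the inner-product term cancels the stray $-\gamma_k\langle\nabla h(\momentumone_{k+1}),\primal_{k+1}-x\rangle$ exactly, and the quadratic remainder is the claimed $\tfrac{\gamma_k L\alpha_k}{2}\|\primal_{k+1}-\primal_k\|^2$.

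I expect the main obstacle to be this last step: arranging the three separate first-order expansions so their gradient contributions telescope cleanly into the single term $\langle\nabla h(\momentumone_{k+1}),\primal_{k+1}-x\rangle$, making the cancellation against Lemma \ref{lem:cvinit} exact. This is exactly where the specific averaging structure matters---a generic extrapolation point would leave an uncancelled residual---and it is the mechanism by which evaluating $\nabla h$ at $\momentumone_{k+1}$ recovers accelerated rather than plain gradient behaviour. Everything else is routine bookkeeping that matches \eqref{eq:telelast} line by line.
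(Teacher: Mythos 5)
Your proposal is correct and follows essentially the same route as the paper's proof: both rest on the same three inequalities for $h$ expanded around the extrapolation point $\momentumone_{k+1}$ (descent lemma at $\momentumtwo_{k+1}$, convexity at $\momentumtwo_k$ and $x$, using the identities $\momentumtwo_{k+1}-\momentumone_{k+1}=\alpha_k(\primal_{k+1}-\primal_k)$ and $\momentumone_{k+1}-\primal_k=\tfrac{1-\alpha_k}{\alpha_k}(\momentumtwo_k-\momentumone_{k+1})$), combined with Lemma \ref{lem:cvinit} and Jensen's inequality for $g$ and $f^*$; you merely apply the Jensen step first and the $h$-expansion last, whereas the paper reverses that order. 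You also correctly read the dual average via the recursion $w_{k+1}=\alpha_k \dualit_{k+1}+(1-\alpha_k)w_k$ (with $w_0=\dualit_0$), which is what the paper's proof actually uses despite the slightly different expression in the lemma statement.
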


\begin{proof}
We begin by bounding the suboptimality of $h$. Using the convexity of $h$,
\begin{equation}
\begin{aligned}
    & \gamma_k (h(\momentumone_{k+1}) - h(x)) \\
    & \le \gamma_k \langle \nabla h(\momentumone_{k+1}), \momentumone_{k+1} - x \rangle \\
    & = \gamma_k \big( \langle \nabla h(\momentumone_{k+1}), \momentumone_{k+1} - \primal_k \rangle + \langle \nabla h(\momentumone_{k+1}), \primal_k - \primal_{k+1} \rangle + \langle \nabla h(\momentumone_{k+1}), \primal_{k+1} - x \rangle \big).
\end{aligned}
\end{equation}
Using the definitions $\momentumone_{k+1} - \primal_k = \frac{1 - \alpha_k}{\alpha_k} (\momentumtwo_k - \momentumone_{k+1})$ and $\primal_k - \primal_{k+1} = 1 / \alpha_k ( \momentumone_{k+1} - \momentumtwo_{k+1} )$ from lines 2 and 5 in Algorithm \ref{alg:framework}, this becomes
\begin{equation}
\begin{aligned}
    \gamma_k (h(\momentumone_{k+1}) - h(x)) & \le \frac{\gamma_k (1 - \alpha_k)}{\alpha_k} \langle \nabla h(\momentumone_{k+1}), \momentumtwo_k - \momentumone_{k+1} \rangle \\
    & \quad + \frac{\gamma_k}{\alpha_k} \langle \nabla h(\momentumone_{k+1}), \momentumone_{k+1} - \momentumtwo_{k+1} \rangle + \gamma_k \langle \nabla h(\momentumone_{k+1}), \primal_{k+1} - x \rangle \\ 
%
%
    & \le \frac{\gamma_k (1 - \alpha_k)}{\alpha_k} ( h(\momentumtwo_k) - h(\momentumone_{k+1}) ) + \frac{\gamma_k}{\alpha_k} ( h(\momentumone_{k+1}) - h(\momentumtwo_{k+1}) ) \\
    & \quad + \frac{L \gamma_k}{2 \alpha_k} \|\momentumone_{k+1} - \momentumtwo_{k+1}\|^2 + {  \gamma_k} \langle \nabla h(\momentumone_{k+1}), \primal_{k+1} - x \rangle,
\end{aligned}
\end{equation}
where the final inequality follows from the convexity and smoothness of $h$. From here on, we write $\|\momentumone_{k+1} - \momentumtwo_{k+1}\|^2$ as $\alpha_k^2 \|\primal_{k+1} - \primal_k\|^2$. We bound the final term using Lemma \ref{lem:cvinit}.
\begin{equation}
\begin{aligned}
    & \gamma_k (h(\momentumone_{k+1}) - h(x) + f^*(\dualit_{k+1}) - f^*(y) + g(\primal_{k+1}) - g(x) \\
    & \quad + \langle A \primal_{k+1}, y \rangle - \langle A x, \dualit_{k+1} \rangle) \\
    \le & \frac{\gamma_k (1 - \alpha_k)}{\alpha_k} ( h(\momentumtwo_k) - h(\momentumone_{k+1}) ) + \frac{\gamma_k}{\alpha_k} ( h(\momentumone_{k+1}) - h(\momentumtwo_{k+1}) ) \\
    & \quad + \frac{\gamma_k}{2 \taustep_k} (\|\primal_k - x\|^2 - (1+\mu_g \taustep_k) \|\primal_{k+1} - x\|^2 + (L \alpha_k \taustep_k - 1)\|\primal_{k+1} - \primal_k\|^2) \\
    & \quad + \frac{1}{2} \|\dualit_k - y\|^2 - \frac{1 + \mu_{f^*} \gamma_k}{2} \|\dualit_{k+1} - y\|^2 + \frac{\theta_k^2 \gamma_k^2 \|A\|_{\textnormal{op}}^2}{2} \|\primal_k - \primal_{k-1}\|^2 \\
    & \quad + \gamma_k \theta_k \langle A (\primal_k - \primal_{k-1}), \dualit_k - y \rangle - \gamma_k \langle A (\primal_{k+1} - \primal_k), \dualit_{k+1} - y \rangle.
\end{aligned}
\end{equation}
After simplifications, this is equivalent to
\begin{equation}
\label{eq:pd}
\begin{aligned}
    & \frac{\gamma_k}{\alpha_k} \big( h(\momentumtwo_{k+1}) - h(x) \big) + \gamma_k \big( g(\primal_{k+1}) + f^*(\dualit_{k+1}) - g(x) - f^*(y) \\
    & \quad + \langle A \primal_{k+1}, y \rangle - \langle A x, \dualit_{k+1} \rangle \big) \\
    \le & \frac{\gamma_k (1 - \alpha_k)}{\alpha_k} ( h(\momentumtwo_k) - h(x) ) + \frac{\gamma_k}{2 \taustep_k} \left(L \alpha_k \taustep_k - 1 \right) \|\primal_{k+1} - \primal_k\|^2 \\
    & \quad + \frac{\theta_k^2 \gamma_k^2 \|A\|_{\textnormal{op}}^2}{2} \|\primal_k - \primal_{k-1}\|^2 + \frac{\gamma_k}{2 \taustep_k} (\|\primal_k - x\|^2 - (1 + \mu_g \taustep_k) \|\primal_{k+1} - x\|^2) \\
    & \quad + \frac{1}{2} \|\dualit_k - y\|^2 - \frac{1 + \mu_{f^*} \gamma_k}{2} \|\dualit_{k+1} - y\|^2 + \gamma_k \theta_k \langle A (\primal_k - \primal_{k-1}), \dualit_k - y \rangle \\
    & \quad - \gamma_k \langle A (\primal_{k+1} - \primal_k), \dualit_{k+1} - y \rangle.
\end{aligned}
\end{equation}
Letting $w_{k+1} = \alpha_k \dualit_{k+1} + (1 - \alpha_k) w_k$ (with $w_0 = \dualit_0$), we use the convexity of $g$ and $f^*$ to say $g(\momentumtwo_{k+1}) \le \alpha_k g(\primal_{k+1}) + (1-\alpha_k) g(\momentumtwo_k)$ and $f^*(w_{k+1}) \le \alpha_k f^*(\dualit_{k+1}) + (1-\alpha_k) f^*(w_k)$. Therefore, 
%
\begin{equation}
\begin{aligned}
    & -\gamma_k \big( g(\primal_{k+1}) + f^*(\dualit_{k+1}) - g(x) - f^*(y) + \langle A \primal_{k+1}, y \rangle - \langle A x, \dualit_{k+1} \rangle \big) \\
    \le & -\frac{\gamma_k}{\alpha_k} \big( g(\momentumtwo_{k+1}) + f^*(w_{k+1}) - g(x) - f^*(y) + \langle A \momentumtwo_{k+1}, y \rangle - \langle A x, w_{k+1} \rangle \big) \\
    & \quad + \frac{\gamma_k (1 - \alpha_k)}{\alpha_k} \big( g(\momentumtwo_k) + f^*(w_k) - g(x) - f^*(y) + \langle A \momentumtwo_k, y \rangle - \langle A x, w_k \rangle \big).
\end{aligned}
\end{equation}
Combining this inequality with \eqref{eq:pd} completes the proof. 
\end{proof}

With the fundamental inequality \eqref{eq:telelast}, we are now prepared to prove convergence rates for the accelerated Condat--V\~u algorithm.

\section{Convergence analysis}

We prove convergence guarantees for Algorithm \ref{alg:framework} in three different settings. The first is the general setting, where we provide a range of parameters that guarantees the convergence of Algorithm \ref{alg:framework} for all convex functions $f$, $g$, and $h$, as well as specific parameter settings that provide a near-optimal convergence rate. When $g$ or $h$ is strongly convex, or when $f$ is smooth, we prove accelerated sublinear convergence rates. Finally, when $g$ or $h$ is strongly convex and $f$ is smooth, we prove a linear convergence rate. The proofs of convergence for algorithms incorporating momentum are often considered difficult and their intuition opaque \cite{lincoup}, but each of our proofs are simple extensions of Lemma \ref{lem:cvtele}, and adapting ACV to take advantage of additional smoothness and strong convexity only requires using different parameter settings in Algorithm \ref{alg:framework}.


\subsection{Convergence in the general setting}

Without any additional assumptions on the functions in \eqref{eq:main}, we show that Algorithm \ref{alg:acv} achieves a convergence rate of $\mathcal{O}(L/T^2 + \|A\|_{\textnormal{op}}/T)$. This convergence rate is optimal, and as $L/\|A\|_{\textnormal{op}}$ goes from $0$ to $\infty$, this rate interpolates between the rate of APGD and the Chambolle--Pock algorithm. As $k$ increases, the step sizes $\gamma_k$ and $\taustep_k$ also interpolate between the step sizes of accelerated proximal gradient descent up to a factor of $1/2$ (so $\taustep_k = \tfrac{k+1}{4 L}$) \cite{lincoup} 
and common choices of step sizes for the Chambolle--Pock algorithm up to a factor of $1/\sqrt{2}$ (so $\gamma_k = \taustep_k = \tfrac{1}{\sqrt{2} \|A\|_{\textnormal{op}}}$) \cite{chambollepock}.

\begin{center}
\begin{minipage}{0.975\linewidth}
\begin{algorithm}[H]
\caption{Accelerated Condat--V\~u: No strong convexity, $f$ is non-smooth}
\label{alg:acv}
  \begin{algorithmic}[1]
  \State Initialize $\primal_0, \momentumtwo_0 \in \mathcal{X}$ and $\dualit_0 \in \mathcal{Y}$. Set $\gamma_k$, $\taustep_k$, $\alpha_k$, and $\theta_k$ so that the constraints in \eqref{eq:genconst} are satisfied.
  \State 
  $\textnormal{ACV}(\primal_0,\dualit_0,\momentumtwo_0, \gamma_k,\taustep_k,\alpha_k,\theta_k,T)$.
  \State $w_{T} \leftarrow \alpha_{T} \dualit_{T} + (1 - \alpha_{T}) \momentumtwo_{T-1}$.
  \Ensure $(\momentumtwo_T, w_T)$.
  \end{algorithmic}
\end{algorithm}
\end{minipage}
\end{center}

This result significantly improves on the convergence rate of the unaccelerated Condat--V\~u algorithm, which is shown to be $\mathcal{O}((L + \|A\|_{\textnormal{op}})/T)$ \cite{CPrates}. It also matches the convergence rate of the accelerated algorithm of Chen \textit{et al}. \cite{chen13}, although the algorithm of Chen \textit{et al}. is only applicable when $g \equiv 0$.

\begin{theorem}
Suppose the parameters of Algorithm \ref{alg:acv} satisfy
\begin{equation}
\label{eq:genconst}
\begin{aligned}
    & \frac{\gamma_{k+1} (1 - \alpha_{k+1})}{\alpha_{k+1}} - \frac{\gamma_k}{\alpha_k} \le 0, ~  \frac{\gamma_{k+1}}{\taustep_{k+1}} - \frac{\gamma_k}{\taustep_k} \le 0, ~ 
    L \alpha_k \taustep_k + \gamma_k \taustep_k \|A\|_{\textnormal{op}}^2 \le 1, ~ \theta_k = \frac{\gamma_{k-1}}{\gamma_k}.
\end{aligned}
\end{equation}
%
After $T-1$ iterations, the following inequality holds:
\begin{equation}
\label{eq:cvx}
\begin{aligned}
    & PD_{\mathcal{B}_1 \times \mathcal{B}_2}(\momentumtwo_T, w_T) \le \frac{C \alpha_T}{\gamma_T},
\end{aligned}
\end{equation}
where
\begin{equation}
\label{eq:C}
    C \defeq \max_{(x',y') \in \mathcal{B}_1 \times \mathcal{B}_2} \frac{\gamma_0 (1 - \alpha_0)}{\alpha_0} ( \mathcal{L}(\momentumtwo_0, y') - \mathcal{L}(x', w_0) ) + \frac{\gamma_0}{2 \taustep_0} \|\primal_0 - x'\|^2 + \frac{1}{2} \|\dualit_0 - y'\|^2.
\end{equation}
\end{theorem}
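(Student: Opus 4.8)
The plan is to telescope the one-iteration estimate of Lemma~\ref{lem:cvtele} over $k = 0, 1, \dots, T-1$, after specializing it to the general setting $\mu_g = \mu_{f^*} = 0$, and to read off the four constraints in \eqref{eq:genconst} as precisely the conditions that make each family of terms collapse. Writing $G_k(x,y) \defeq \mathcal{L}(\momentumtwo_k, y) - \mathcal{L}(x, w_k)$ for the averaged Lagrangian gap, the summed inequality splits into five groups: the gap terms $\tfrac{\gamma_k}{\alpha_k} G_{k+1}$ against $\tfrac{\gamma_k(1-\alpha_k)}{\alpha_k} G_k$; the anchor terms $\tfrac{\gamma_k}{2\taustep_k}(\|\primal_k - x\|^2 - \|\primal_{k+1}-x\|^2)$; the consecutive-difference terms in $\|\primal_{k+1}-\primal_k\|^2$ and $\|\primal_k - \primal_{k-1}\|^2$; the dual terms $\tfrac12(\|\dualit_k - y\|^2 - \|\dualit_{k+1}-y\|^2)$; and the bilinear cross terms.

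First I would handle the routine telescopes. The dual terms collapse to $\tfrac12\|\dualit_0 - y\|^2 - \tfrac12\|\dualit_T - y\|^2$. With $\theta_k = \gamma_{k-1}/\gamma_k$ (the fourth constraint), the coefficient $\gamma_k\theta_k$ equals $\gamma_{k-1}$, so the cross terms become $P_k - P_{k+1}$ with $P_k \defeq \gamma_{k-1}\langle A(\primal_k - \primal_{k-1}), \dualit_k - y\rangle$; initializing $\primal_{-1} = \primal_0$ kills $P_0$ and leaves only $-P_T$. The anchor terms telescope with varying weights, and the second constraint (monotonicity of $\gamma_k/\taustep_k$) makes the intermediate coefficients nonpositive, so they are bounded by the single term $\tfrac{\gamma_0}{2\taustep_0}\|\primal_0 - x\|^2$. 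For the consecutive-difference terms, each $\|\primal_{k+1}-\primal_k\|^2$ collects a coefficient $\tfrac{\gamma_k}{2\taustep_k}(L\alpha_k\taustep_k - 1)$ from step $k$ and $\tfrac{\gamma_k^2\|A\|_{\textnormal{op}}^2}{2}$ from the $\theta^2$-term of step $k+1$; their sum is nonpositive exactly when $L\alpha_k\taustep_k + \gamma_k\taustep_k\|A\|_{\textnormal{op}}^2 \le 1$, which is the third constraint.

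The delicate endgame is the leftover term $-P_T = -\gamma_{T-1}\langle A(\primal_T - \primal_{T-1}), \dualit_T - y\rangle$, which has no telescoping partner. I would absorb it with Young's inequality, splitting it into $\tfrac{\gamma_{T-1}^2\|A\|_{\textnormal{op}}^2}{2}\|\primal_T - \primal_{T-1}\|^2 + \tfrac12\|\dualit_T - y\|^2$: the dual part cancels the leftover $-\tfrac12\|\dualit_T - y\|^2$, and the primal part combines with the unpaired last difference term $\tfrac{\gamma_{T-1}}{2\taustep_{T-1}}(L\alpha_{T-1}\taustep_{T-1}-1)\|\primal_T-\primal_{T-1}\|^2$, the sum again being nonpositive by the third constraint. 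After discarding all these nonpositive quantities, dividing by $\gamma_{T-1}/\alpha_{T-1}$ and taking the maximum over $(x,y) \in \mathcal{B}_1 \times \mathcal{B}_2$ identifies the left side as $PD_{\mathcal{B}_1\times\mathcal{B}_2}(\momentumtwo_T, w_T)$ and the right side as $\tfrac{\alpha_T}{\gamma_T} C$ (after the index bookkeeping relating iteration $T-1$ to the reported step $T$).

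I expect the main obstacle to be the telescoping of the gap terms $G_k$. Rearranging the sum isolates $\tfrac{\gamma_{T-1}}{\alpha_{T-1}}G_T$ together with intermediate terms $\sum_{k=1}^{T-1}\big(\tfrac{\gamma_{k-1}}{\alpha_{k-1}} - \tfrac{\gamma_k(1-\alpha_k)}{\alpha_k}\big) G_k$, whose coefficients are nonnegative precisely by the first constraint. These intermediate terms cancel identically when that constraint holds with equality (the natural FISTA-type weight recursion $\gamma_k/\alpha_k = (\gamma_{k-1}/\alpha_{k-1})/(1-\alpha_k)$); in the strict-inequality regime they must be discarded, which requires the nonnegativity of the saddle gap and is where the compact set $\mathcal{B}_1\times\mathcal{B}_2$ containing a saddle point enters. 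Arranging this step to yield the \emph{full} primal-dual gap over $\mathcal{B}_1\times\mathcal{B}_2$, rather than the gap at a single saddle point, while keeping the clean constant $C$, is the crux of the argument and the part I would write most carefully.
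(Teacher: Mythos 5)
Your proposal follows essentially the same route as the paper's proof: specialize Lemma~\ref{lem:cvtele} to $\mu_g = \mu_{f^*} = 0$, telescope each family of terms under the four constraints of \eqref{eq:genconst} (with $\primal_{-1} = \primal_0$ killing the initial cross term), absorb the unpaired final cross term $-\gamma_{T-1}\langle A(\primal_T - \primal_{T-1}), \dualit_T - y\rangle$ via Young's inequality so that it cancels $-\tfrac12\|\dualit_T - y\|^2$ and folds into the last difference term controlled by $L\alpha_k\taustep_k + \gamma_k\taustep_k\|A\|_{\textnormal{op}}^2 \le 1$, then drop the nonpositive remainders and take the maximum over $\mathcal{B}_1 \times \mathcal{B}_2$. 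If anything, you are more careful than the paper on the two points it glosses --- the sign of the discarded intermediate gap terms (which is where the compact set $\mathcal{B}_1 \times \mathcal{B}_2$ containing a saddle point is genuinely needed) and the index bookkeeping between the surviving coefficient $\gamma_{T-1}/\alpha_{T-1}$ and the stated $\gamma_T/\alpha_T$ --- so the proposal is correct and matches the paper's argument.
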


\begin{proof}
We begin with the inequality of Lemma \ref{lem:cvtele} with $\mu_g = \mu_{f^*} = 0$.
\begin{equation}
\begin{aligned}
    & \frac{\gamma_k}{\alpha_k} ( \mathcal{L}(\momentumtwo_{k+1}, y) - \mathcal{L}(x, w_{k+1}) ) \\
    \le & \frac{\gamma_k (1 - \alpha_k)}{\alpha_k} ( \mathcal{L}(\momentumtwo_k, y) - \mathcal{L}(x, w_k) ) + \frac{\gamma_k}{2 \taustep_k} \left( L \alpha_k \taustep_k - 1 \right) \|\primal_{k+1} - \primal_k\|^2 \\
    & \quad + \frac{\theta_k^2 \gamma_k^2 \|A\|_{\textnormal{op}}^2}{2} \|\primal_k - \primal_{k-1}\|^2 + \frac{\gamma_k}{2 \taustep_k} (\|\primal_k - x\|^2 - \|\primal_{k+1} - x\|^2) + \frac{1}{2} \|\dualit_k - y\|^2 \\
    & \quad - \frac{1}{2} \|\dualit_{k+1} - y\|^2 + \gamma_k \theta_k \langle A (\primal_k - \primal_{k-1}), \dualit_k - y \rangle - \gamma_k \langle A (\primal_{k+1} - \primal_k), \dualit_{k+1} - y \rangle.
\end{aligned}
\end{equation}
We consider each term separately to ensure that this inequality telescopes over several iterations. The primal-dual gap terms are
\begin{equation}
\begin{aligned}
    & \frac{\gamma_k}{\alpha_k} ( \mathcal{L}(\momentumtwo_{k+1}, y) - \mathcal{L}(x, w_{k+1}) ) \le \frac{\gamma_k (1-\alpha_k)}{\alpha_k} ( \mathcal{L}(\momentumtwo_k, y) - \mathcal{L}(x, w_k) ) + \cdots.
\end{aligned}
\end{equation}
Taking the supremum in $(x,y)$ over the set $\mathcal{B}_1 \times \mathcal{B}_2$, these terms are non-negative, so they telescope as long as
\begin{equation}
\label{eq:cvx1}
    \frac{\gamma_{k+1}(1-\alpha_{k+1})}{\alpha_{k+1}} - \frac{\gamma_k}{\alpha_k} \le 0.
\end{equation}
For the terms involving $\|\primal_k - x\|^2$ to telescope, we must have 
\begin{equation}
\label{eq:cvx2}
    \frac{\gamma_{k+1}}{\tau_{k+1}} \le \frac{\gamma_k}{\tau_k},
\end{equation}
%
For the final inner-product term to telescope, we require 
\begin{equation}
\label{eq:cvx3}
    \theta_k = \frac{\gamma_{k-1}}{\gamma_k}.
\end{equation}
%
Taking the maximum of $(x,y) \in \mathcal{B}_1 \times \mathcal{B}_2$ and summing the resulting inequality from $k=0$ to $k = T-1$, we have
\begin{equation}
\begin{aligned}
    & \frac{\gamma_T}{\alpha_T} PD_{\mathcal{B}_1 \times \mathcal{B}_2}(\momentumtwo_T, w_T) \le C + \frac{\gamma_{T-1}}{2 \taustep_{T-1}} (L \alpha_{T-1} \taustep_{T-1} - 1) \|\primal_T - \primal_{T-1}\|^2 \\
    & + \sum_{k=0}^{T-2} \left( \frac{\gamma_k}{2 \taustep_k} \left( L \alpha_k \taustep_k + \gamma_k \taustep_k \|A\|_{\textnormal{op}}^2 - 1 \right) \|\primal_{k+1} - \primal_k\|^2 \right) \\
    & - \frac{\gamma_T}{2 \taustep_T} \|\primal_T - x'\|^2 - \frac{1}{2} \|\dualit_T - y'\|^2 - \gamma_{T-1} \langle A (\primal_T - \primal_{T-1}), \dualit_T - y' \rangle.
\end{aligned}
\end{equation}
We bound the final term using Young's inequality:
\begin{equation}
\begin{aligned}
    \gamma_{T-1} \langle A (\primal_T - \primal_{T-1}), \dualit_T - y' \rangle \le \frac{\gamma_{T-1}^2 \|A\|_{\textnormal{op}}^2}{2} \|\primal_T - \primal_{T-1}\|^2 + \frac{1}{2} \|\dualit_T - y'\|^2.
\end{aligned}
\end{equation}
The inequality then becomes
\begin{equation}
\begin{aligned}
    & \frac{\gamma_T}{\alpha_T} PD_{\mathcal{B}_1 \times \mathcal{B}_2}(\momentumtwo_T, w_T) \\
    \le & C + \Big(\sum_{k=0}^{T-1} \frac{\gamma_k}{2 \taustep_k} \left( L \alpha_k \taustep_k + \gamma_k \taustep_k \|A\|_{\textnormal{op}}^2 - 1 \right) \|\primal_{k+1} - \primal_k\|^2 \Big) - \frac{\gamma_T}{2 \taustep_T} \|\primal_T - x'\|^2.
\end{aligned}
\end{equation}
Using the final constraint on the parameters
\begin{equation}
\label{eq:cvx5}
    L \alpha_k \taustep_k + \gamma_k \taustep_k \|A\|_{\textnormal{op}}^2 \le 1,
\end{equation}
the terms involving $\|\primal_{k+1} - \primal_k\|^2$ are non-positive. Dropping all of the non-positive terms on the right, we have the result.
\end{proof}

\begin{remark}[Suggested Parameter Settings]
To achieve the desired convergence rates, we suggest the parameter settings
\begin{equation}\label{step_size_default}
\alpha_k = \frac{1}{k/2 + 1}, \quad \gamma_k = \tau_k = \frac{k+1}{\sqrt{2} \|A\|_{\textnormal{op}} k + 4 L}, \quad \textnormal{and} \quad \theta_k = \frac{\gamma_{k-1}}{\gamma_k}.
\end{equation}
With these values, inequality \eqref{eq:cvx} reads
\begin{equation}
    PD_{\mathcal{B}_1 \times \mathcal{B}_2}(\momentumtwo_T,w_T)
    \le \max_{(x',y') \in \mathcal{B}_1 \times \mathcal{B}_2} {  \frac{\sqrt{2} \|A\|_{\textnormal{op}} T + 4 L}{2 (1+\frac{T}{2})(1+T)}} \big(\|\primal_0 - x'\|^2 + \|\dualit_0 - y'\|^2 \big).
\end{equation}
This provides a convergence rate of $\mathcal{O}(L/T^2 + \|A\|_{\textnormal{op}} / T)$.

\begin{remark}
    The requirements on the parameters also show how ACV recovers the non-accelerated Condat--V\~u algorithm when $\alpha_k = 1$. In this case, the requirements on $\gamma_k$ and $\taustep_k$ reduce to $\gamma_k \taustep_k \|A\|_{\textnormal{op}}^2 \leq 1 - L \taustep_k$, which are the same conditions on the parameters derived in the original works of Condat and V\~u \cite{Condat2013,Vu2013condatvu}. In this case, we recover similar convergence guarantees on the ergodic sequence as the best-known for the Condat--V\~u algorithm.
\end{remark}
\end{remark}

\noindent If $g$ is strongly convex or if $f$ is smooth, these rates can be improved.


\subsection{Acceleration with strong convexity}

When $g$ is strongly convex, we prove a convergence rate of $\mathcal{O}(\tfrac{L}{\mu_g (1+\sqrt{\kappa_P})^{T}} + \tfrac{\|A\|_{\textnormal{op}}^2}{\mu_g T^2})$. This rate is optimal in the sense that it reduces to the convergence rate of accelerated proximal gradient descent when $\|A\|_{\textnormal{op}} = 0$, and it matches the rate of the Chambolle--Pock algorithm when there is no smooth component, {$h \equiv 0$ (implying $L = 0$)} \cite{chambollepock}. This result is much stronger than the convergence rate for Condat--V\~u in this setting, which has been shown to be $\mathcal{O}( (L + \|A\|_{\textnormal{op}}^2) / T^2)$. Our results also imply that it is possible to converge to a fixed-accuracy solution, one where the duality gap is $\mathcal{O}(\tfrac{\|A\|_{\textnormal{op}}^2}{\mu_g L})$, at a linear rate, because the linearly decaying term $\mathcal{O}(\tfrac{L}{\mu_g (1+\sqrt{\mu_g / L})^{T}})$ is dominant. For higher accuracy solutions, the algorithm transitions to a slower $\mathcal{O}(1/T^2)$ convergence rate.

\begin{center}
\begin{minipage}{0.975\linewidth}
\begin{algorithm}[H]
\caption{Accelerated Condat--V\~u for Strongly Convex Objectives}
\label{alg:acv2}
  \begin{algorithmic}[1]
  \State Initialize $\primal_0, v_0 \in \mathcal{X}$ and $\dualit_0 \in \mathcal{Y}$ and set $\gamma_0$, $\tau_0$, $\alpha_0$, and $\theta_0$ so that the constraints in \eqref{eq:acc_const} are satisfied. Set the number of warm-up iterations $T_0 \ge 1$.
  \State 
  $\textnormal{ACV}(\primal_0,\dualit_0,\momentumtwo_0,\gamma_0,\taustep_0,\alpha_0,\theta_0,T_0)$.
  \State Set $\gamma_k$, $\tau_k$, $\alpha_k$, and $\theta_k$ so that the constraints in \eqref{eq:acc_const_2} are satisfied.
  \State 
  $\textnormal{ACV}(\primal_{T_0},\dualit_{T_0},\momentumtwo_{T_0},\{\gamma_k\},\{\taustep_k\},\{\alpha_k\},\{\theta_k\},T)$
  \State $w_{T} \leftarrow \alpha_{T} \dualit_{T} + (1 - \alpha_{T}) \momentumtwo_{T-1}$.
  \Ensure $(\momentumtwo_T, w_T)$.
  \end{algorithmic}
\end{algorithm}
\end{minipage}
\end{center}

We call these two convergence phases the ``warm-up'' phase, where one can expect linear convergence, and the ``steady convergence'' phase, where the $\mathcal{O}(1/T^2)$ term in the convergence rate is dominant. These phases correspond to different parameter settings in Algorithm \ref{alg:acv2}. The parameters are held constant for the first $T_0 = \lfloor\sqrt{\frac{L}{\mu_g}} + \frac{\max\{\log(5 L/(2 \|A\|_{\textnormal{op}}^2) ),0\}}{\log(1+\sqrt{\mu_g / (4 L)})} \rfloor$ iterations to achieve linear convergence. After this, we use parameters similar to those in Algorithm \ref{alg:acv}.

\begin{theorem}
\label{thm:cvacc}
Suppose $g$ is $\mu_g$-strongly convex, let $T_0 \ge 1$, and let 
\begin{equation}
U_{T_0} \defeq \max \big\{ \max_{y' \in \mathcal{B}_2} \| \dualit_k - y'\|^2 \big\}_{k=0}^{T_0}.
\end{equation}
%
\emph{(Warm-Up Phase).} For the first $T_0$ iterations, set the parameters {  of Algorithm \ref{alg:acv2}} so that 
\begin{equation}
\label{eq:acc_const}
\frac{1}{\theta_0} \le \frac{1 - L \alpha_0 \taustep_0}{\gamma_0 \taustep_0 \|A\|_{\textnormal{op}}^2}, \quad \frac{1}{\theta_0} \le \frac{1}{1-\alpha_0}, \quad \frac{1}{\theta_0} \le 1 + \mu_g \taustep_0.
\end{equation}
%
After $T_0$ iterations,
\begin{equation}
\begin{aligned}
    & \frac{\gamma_0 (1-\alpha_0)}{\alpha_0} PD_{\mathcal{B}_1 \times \mathcal{B}_2}(v_{T_0}, w_{T_0}) + \frac{\gamma_0}{2 \taustep_0} \|\primal_{T_0} - \overline{x}\|^2 \le \theta^{T_0} C + \Big(\frac{1 - \theta^{T_0}}{1/\theta-1} \Big) \frac{\mu_g \tau_0}{2} U_{T_0} ,
\end{aligned}
\end{equation}
where $C$ is the constant defined in \eqref{eq:C}, and $\overline{x} \in \mathcal{B}_1$ is such that $\mathcal{L}(\momentumtwo_T, \overline{y}) - \mathcal{L}(\overline{x}, w_T) = PD_{\mathcal{B}_1 \times \mathcal{B}_2}(\momentumtwo_T,w_T)$ for some $\overline{y} \in \mathcal{B}_2$.

\vspace{5mm}

\noindent \emph{(Steady Convergence Phase).} Following the warm-up phase, set the parameters {  of Algorithm \ref{alg:acv2}} so that 
%
%
\begin{equation}
\label{eq:acc_const_2}
    \frac{\gamma_{k+1}(1-\alpha_{k+1})}{\alpha_{k+1}} - \frac{\gamma_k}{\alpha_k} \le 0, ~ \frac{\gamma_{k+1}}{\taustep_{k+1}} - \frac{\gamma_k (1 + \mu_g \taustep_k)}{\taustep_k} \le 0, ~ \frac{\|A\|_{\textnormal{op}}^2}{2} + \frac{L \alpha_k}{2 \gamma_k} - \frac{1}{2 \taustep_k \gamma_k} \le 0.
\end{equation}
After $T-1$ iterations of Algorithm \ref{alg:acv} with $T \ge T_0$, the following inequality holds:
\begin{equation}
\label{eq:strong}
\begin{aligned}
& PD_{\mathcal{B}_1 \times \mathcal{B}_2} (\momentumtwo_T, w_T) + \frac{\alpha_{T-1} (1 + \mu_g \taustep_{T-1})}{2\taustep_{T-1}} \|\primal_T - \overline{x}\|^2 \\
& \le \frac{\alpha_{T-1}}{\gamma_{T-1}} \Bigg(\theta^{T_0} C + \Big(\frac{1 - \theta^{T_0}}{\rho-1} \Big) \frac{\mu_g \tau_0}{2} U_{T_0} + \max_{y' \in \mathcal{B}_2} \frac{1}{2} \|\dualit_{T_0} - y'\|^2 \Bigg).
\end{aligned}
\end{equation}
%
%
\end{theorem}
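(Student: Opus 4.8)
The plan is to derive both phases from the one-iteration inequality \eqref{eq:telelast} of Lemma \ref{lem:cvtele}, specialised to $\mu_{f^*}=0$ (since here $f$ is only assumed convex). The warm-up phase is obtained by constructing a Lyapunov energy that contracts geometrically by the constant factor $\theta_0 \defeq \theta$ per step, and the steady-convergence phase by telescoping the same inequality over $k=T_0,\dots,T-1$ with growing step-size ratios $\gamma_k/\taustep_k$, seeding the telescope with the warm-up output.

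For the warm-up phase I would introduce the energy
\begin{equation*}
\Phi_k \defeq \frac{\gamma_0(1-\alpha_0)}{\alpha_0}\big(\mathcal{L}(\momentumtwo_k,y)-\mathcal{L}(x,w_k)\big) + \frac{\gamma_0}{2\taustep_0}\|\primal_k-x\|^2 + \tfrac{1}{2}\|\dualit_k-y\|^2 - \gamma_0\langle A(\primal_k-\primal_{k-1}),\dualit_k-y\rangle + c_0\|\primal_k-\primal_{k-1}\|^2,
\end{equation*}
with $c_0\ge 0$ fixed by the first constraint, and first take the supremum over $(x,y)\in\mathcal{B}_1\times\mathcal{B}_2$ so that the gap term is non-negative. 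The goal is the recursion $\Phi_{k+1}\le\theta\big(\Phi_k + \tfrac{\mu_g\taustep_0}{2}U_{T_0}\big)$; iterating it and summing the geometric series gives the stated bound, with $\Phi_0\le C$ and the residual dual and coupling terms at $T_0$ absorbed using Young's inequality so that $\Phi_{T_0}$ dominates the left side. Each condition in \eqref{eq:acc_const} controls one family of terms: the second, $\tfrac{1}{\theta}\le\tfrac{1}{1-\alpha_0}$, lets the non-negative gap on the right of \eqref{eq:telelast} be absorbed into $\theta$ times the gap in $\Phi_k$; the third, $\tfrac{1}{\theta}\le 1+\mu_g\taustep_0$, uses strong convexity so the $(1+\mu_g\taustep_0)$-weighted primal term in \eqref{eq:telelast} dominates $\tfrac{1}{\theta}$ times the primal term of $\Phi_{k+1}$; and the first, $\tfrac{1}{\theta}\le\tfrac{1-L\alpha_0\taustep_0}{\gamma_0\taustep_0\|A\|_{\textnormal{op}}^2}$, balances the momentum term $\tfrac{\theta^2\gamma_0^2\|A\|_{\textnormal{op}}^2}{2}\|\primal_k-\primal_{k-1}\|^2$ and the interaction $\tfrac{\gamma_0}{2\taustep_0}(L\alpha_0\taustep_0-1)\|\primal_{k+1}-\primal_k\|^2$ against the coupling. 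The error term arises because the dual distance cannot contract when $\mu_{f^*}=0$: splitting $\tfrac12\|\dualit_k-y\|^2 = \theta\tfrac12\|\dualit_k-y\|^2 + (1-\theta)\tfrac12\|\dualit_k-y\|^2$ and using $1-\theta\le\theta\mu_g\taustep_0$ (from the third condition) bounds the uncontracted portion by $\theta\tfrac{\mu_g\taustep_0}{2}U_{T_0}$, producing exactly the geometric sum $\tfrac{1-\theta^{T_0}}{1/\theta-1}$.

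For the steady-convergence phase I would mirror the proof of the general-setting theorem while retaining the strong-convexity boost. Summing \eqref{eq:telelast} with $\mu_{f^*}=0$ from $k=T_0$ to $T-1$ and taking the supremum over $\mathcal{B}_1\times\mathcal{B}_2$, the first constraint in \eqref{eq:acc_const_2} telescopes the non-negative gap terms, the second telescopes the primal-distance terms while allowing $\gamma_k/\taustep_k$ to increase (this is precisely where strong convexity yields the accelerated $\mathcal{O}(1/T^2)$ behaviour), and the third, together with the choice $\theta_k=\gamma_{k-1}/\gamma_k$ and a final application of Young's inequality to the leftover term $-\gamma_{T-1}\langle A(\primal_T-\primal_{T-1}),\dualit_T-y\rangle$, renders every momentum term $\|\primal_{k+1}-\primal_k\|^2$ non-positive so it can be discarded. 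The starting terms of the telescope are exactly the warm-up energy at $k=T_0$; bounding them by the warm-up conclusion and adding the uncontracted dual term $\tfrac12\|\dualit_{T_0}-y'\|^2$ yields the bracketed quantity in \eqref{eq:strong}. Dividing through by the final gap coefficient $\gamma_{T-1}/\alpha_{T-1}$ produces the prefactor $\alpha_{T-1}/\gamma_{T-1}$ and the $(1+\mu_g\taustep_{T-1})$-weighted primal term on the left.

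The main obstacle is the joint treatment of the bilinear coupling $\langle A(\primal_{k+1}-\primal_k),\dualit_{k+1}-y\rangle$ and the dual distance. Unlike purely primal acceleration, the dual iterate does not contract here (because $\mu_{f^*}=0$), so a contracting potential cannot be built from the gap and primal distance alone: the coupling must be absorbed geometrically against the momentum term via the first constraint, and the uncontractible dual distance must be carried as the $U_{T_0}$ error during warm-up and as the $\tfrac12\|\dualit_{T_0}-y'\|^2$ term when the two phases are stitched together. Extracting the full factor $\theta$ (rather than a weaker constant) from the coupling-versus-momentum balance, so that the geometric series collapses to $\tfrac{1-\theta^{T_0}}{1/\theta-1}$, is the most delicate bookkeeping of the argument.
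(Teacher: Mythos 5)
Your proposal is correct and follows essentially the same route as the paper: both phases are driven by Lemma \ref{lem:cvtele} with $\mu_{f^*}=0$, the warm-up uses a contracting potential built from the gap, the primal and dual distances, the momentum term $\tfrac{\theta_0^2\gamma_0^2\|A\|_{\textnormal{op}}^2}{2}\|\primal_k-\primal_{k-1}\|^2$ and the bilinear coupling (absorbed at $k=T_0$ by Young's inequality), with the three constraints in \eqref{eq:acc_const} playing exactly the roles you assign them, and the steady phase telescopes from $k=T_0$ with $\theta_k=\gamma_{k-1}/\gamma_k$ and the convention $\primal_{T_0-1}=\primal_{T_0}$, seeded by the warm-up bound plus the uncontracted $\tfrac12\|\dualit_{T_0}-y'\|^2$. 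Your only deviation is cosmetic: you handle the non-contracting dual distance by splitting $\tfrac12\|\dualit_k-y\|^2$ into $\theta$- and $(1-\theta)$-portions on the right, whereas the paper adds $\tfrac{\mu_g\tau_0}{2}\|\dualit_{k+1}-y\|^2$ to both sides; both rely on $1/\theta\le 1+\mu_g\tau_0$ and produce the identical constant $\tfrac{1-\theta^{T_0}}{1/\theta-1}\tfrac{\mu_g\tau_0}{2}U_{T_0}$.
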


\begin{proof}
We begin with the inequality of Lemma \ref{lem:cvtele}, setting $\mu_{f^*} = 0$.
\begin{equation}
\begin{aligned}
\label{eq:bothphase}
    & \frac{\gamma_k}{\alpha_k} ( \mathcal{L}(\momentumtwo_{k+1}, y) - \mathcal{L}(x, w_{k+1}) ) \\
    & \le \frac{\gamma_k (1 - \alpha_k)}{\alpha_k} ( \mathcal{L}(\momentumtwo_k, y) - \mathcal{L}(x, w_k) ) + \frac{\gamma_k}{2 \taustep_k} \left( L \alpha_k \taustep_k - 1 \right) \|\primal_{k+1} - \primal_k\|^2 + \frac{1}{2} \|\dualit_k - y\|^2 \\
    & + \frac{\theta_k^2 \gamma_k^2 \|A\|_{\textnormal{op}}^2}{2} \|\primal_k - \primal_{k-1}\|^2 + \frac{\gamma_k}{2 \taustep_k} (\|\primal_k - x\|^2 - (1 + \mu_g \taustep_k) \|\primal_{k+1} - x\|^2) \\
    & - \frac{1}{2} \|\dualit_{k+1} - y\|^2 + \gamma_k \theta_k \langle A (\primal_k - \primal_{k-1}), \dualit_k - y \rangle - \gamma_k \langle A (\primal_{k+1} - \primal_k), \dualit_{k+1} - y \rangle.
\end{aligned}
\end{equation}
%
We now consider the two phases of convergence separately, beginning with the warm-up phase where $k < T_0 
$.

\paragraph{Phase 1 (Warm-up)} 
Adding $\tfrac{\mu_g \tau_0}{2} \|y_{k+1} - y\|^2$ to both sides of \eqref{eq:bothphase},
%
%
\begin{equation}
\begin{aligned}
    & \frac{\gamma_0}{\alpha_0} ( \mathcal{L}(\momentumtwo_{k+1}, y) - \mathcal{L}(x, w_{k+1}) ) + \frac{\gamma_0}{2 \taustep_0} \left( 1 - L \alpha_0 \taustep_0 \right) \|\primal_{k+1} - \primal_k\|^2 \\
    & + \frac{\gamma_0 (1 + \mu_g \taustep_0)}{2 \taustep_0} \|\primal_{k+1} - x\|^2 + \frac{1 + \mu_g \taustep_0}{2} \|\dualit_{k+1} - y\|^2 + \gamma_0 \langle A (\primal_{k+1} - \primal_k), \dualit_{k+1} - y \rangle \\
    & \le \frac{\gamma_0 (1 - \alpha_0)}{\alpha_0} ( \mathcal{L}(\momentumtwo_k, y) - \mathcal{L}(x, w_k) ) + \frac{\theta_0^2 \gamma_0^2 \|A\|_{\textnormal{op}}^2}{2} \|\primal_k - \primal_{k-1}\|^2 + \frac{\gamma_0}{2 \taustep_0} \|\primal_k - x\|^2 \\
    & \quad + \frac{1}{2} \|\dualit_k - y\|^2 + \gamma_0 \theta_0 \langle A (\primal_k - \primal_{k-1}), \dualit_k - y \rangle + \frac{\mu_g \taustep_0}{2} \|\dualit_{k+1} - y\|^2.
\end{aligned}
\end{equation}
From this inequality, we would like to derive an inequality of the form $\rho E_{k+1} \le E_k$ for some functional $E_k$ and constant rate of decrease $\rho > 1$. The rate of decrease, $\rho$, is limited by the decrease of each term, providing the following conditions on $\rho$:
\begin{equation}
    \rho = 1 / \theta_0, \quad \rho \le \frac{1}{1 - \alpha_0}, \quad \rho \le 1 + \mu_g \tau_0, \quad \rho \le \frac{1 - L \alpha_0 \taustep_0}{\gamma_0 \taustep_0 \theta_0^2 \|A\|_{\textnormal{op}}^2}.
\end{equation}
%
%
%
{  The constraints on $\theta_0$ given by equation \eqref{eq:acc_const} ensure that these bounds are satisfied.} Hence, we have
\begin{equation}
\begin{aligned}
    & \rho \Bigg( \frac{\gamma_0(1-\alpha_0)}{\alpha_0} ( \mathcal{L}(\momentumtwo_{k+1}, y) - \mathcal{L}(x, w_{k+1}) ) + \frac{1}{2} \|\dualit_{k+1} - y\|^2 + \frac{\gamma_0}{2 \taustep_0} \|\primal_{k+1} - x\|^2 \\
    & \quad + \frac{\theta_0^2 \gamma_0^2 \|A\|_{\textnormal{op}}^2}{2} \|\primal_{k+1} - \primal_k\|^2 + \gamma_0 \theta_0 \langle A (\primal_{k+1} - \primal_k), \dualit_{k+1} - y \rangle \Bigg) \\
    \le & \frac{\gamma_0 (1 - \alpha_0)}{\alpha_0} ( \mathcal{L}(\momentumtwo_k, y) - \mathcal{L}(x, w_k) ) + \frac{1}{2} \|\dualit_k - y\|^2 + \frac{\gamma_0}{2 \taustep_0} \|\primal_k - x\|^2 \\
    & \quad + \frac{\theta_0^2 \gamma_0^2 \|A\|_{\textnormal{op}}^2}{2} \|\primal_k - \primal_{k-1}\|^2 + \gamma_0 \theta_0 \langle A (\primal_k - \primal_{k-1}), \dualit_k - y \rangle + \frac{\mu_g \taustep_0}{2} \|\dualit_{k+1} - y\|^2.
\end{aligned}
\end{equation}
Multiplying this inequality by $\rho^k$, summing from $k=0$ to $k=T_0-1$, {  and recalling that $x_{-1} = x_0$,} we obtain
\begin{equation}
\begin{aligned}
    & \rho^{T_0} \Bigg( \frac{\gamma_0 (1-\alpha_0)}{\alpha_0} ( \mathcal{L}(\momentumtwo_{T_0}, y) - \mathcal{L}(x, w_{T_0}) ) + \frac{1}{2} \|\dualit_{T_0} - y\|^2 + \frac{\gamma_0}{2 \taustep_0} \|\primal_{T_0} - x\|^2 \\
    & \quad + \frac{\theta_0^2 \gamma_0^2 \|A\|_{\textnormal{op}}^2}{2} \|\primal_{T_0} - \primal_{T_0-1}\|^2 + \gamma_0 \theta_0 \langle A (\primal_{T_0} - \primal_{T_0-1}), \dualit_{T_0} - y \rangle \Bigg) \\
    \le & \frac{\gamma_0 (1 - \alpha_0)}{\alpha_0} ( \mathcal{L}(\momentumtwo_0, y) - \mathcal{L}(x, w_0) ) + \frac{1}{2} \|\dualit_0 - y\|^2 + \frac{\gamma_0}{2 \taustep_0} \|\primal_0 - x\|^2 \\
    & \quad + \frac{\mu_g \tau_0}{2} \sum_{k=1}^{T_0} \rho^{k-1} \|\dualit_k - y\|^2.
\end{aligned}
\end{equation}
We bound the inner-product on the left using Young's inequality, obtaining
\begin{equation}
\begin{aligned}
    & \frac{\gamma_0 (1-\alpha_0)}{\alpha_0} ( \mathcal{L}(\momentumtwo_{T_0}, y) - \mathcal{L}(x, w_{T_0}) ) + \frac{\gamma_0}{2 \taustep_0} \|\primal_{T_0} - x\|^2 \\
    \le & \theta^{T_0} \Big( \frac{\gamma_0 (1 - \alpha_0)}{\alpha_0} ( \mathcal{L}(\momentumtwo_0, y) - \mathcal{L}(x, w_0) ) + \frac{1}{2} \|\dualit_0 - y\|^2 + \frac{\gamma_0}{2 \taustep_0} \|\primal_0 - x\|^2 \\
    & \quad + \frac{\mu_g \tau_0}{2} \sum_{k=1}^{T_0} \rho^{k-1} \|\dualit_k - y\|^2 \Big) .
\end{aligned}
\end{equation}
We can bound each term involving $\|\dualit_k - y\|^2$ by $U_{T_0}$. $U_{T_0}$ is finite because either $T_0$ is finite, or $\|A\|_{\textnormal{op}} = 0$ and the algorithm reduces to accelerated proximal gradient descent which converges. We have
\begin{equation}
\begin{aligned}
     \frac{\mu_g \tau_0}{2} \sum_{k=1}^{T_0} \rho^{k-1} \|\dualit_k - y\|^2 &\le \frac{\mu_g \tau_0}{2} \left(\sum_{k=1}^{T_0} \rho^{k-1} \right) U_{T_0} \\
    &\le \left(\frac{\rho^{T_0}-1}{\rho-1} \right) \frac{\mu_g \tau_0}{2} U_{T_0}.
\end{aligned}
\end{equation}
%
%
The final inequality is then
\begin{equation}
\label{eq:warmup}
\begin{aligned}
    & \frac{\gamma_0 (1-\alpha_0)}{\alpha_0} ( \mathcal{L}(\momentumtwo_{T_0}, y) - \mathcal{L}(x, w_{T_0}) ) + \frac{\gamma_0}{2 \taustep_0} \|\primal_{T_0} - x\|^2 \\
    \le & \theta^{T_0} \bigg( \frac{\gamma_0 (1 - \alpha_0)}{\alpha_0} ( \mathcal{L}(\momentumtwo_0, y) - \mathcal{L}(x, w_0) ) + \frac{1}{2} \|\dualit_0 - y\|^2 + \frac{\gamma_0}{2 \taustep_0} \|\primal_0 - x\|^2 \\
    & \quad + \Big(\frac{\rho^{T_0}-1}{\rho-1} \Big) \frac{\mu_g \tau_0}{2} U_{T_0} \bigg) .
\end{aligned}
\end{equation}
Taking the maximum of $(x,y) \in \mathcal{B}_1 \times \mathcal{B}_2$ produces the desired inequality. Because $\rho = 1/\theta$, the coefficient of the final term is approximately $\tfrac{\mu_g \tau_0}{2}$ and not decreasing in $T_0$, so asymptotic convergence during the warm-up phase is not guaranteed. The goal of the warm-up phase is to choose $T_0$ so that each of these terms is approximately equal, providing a good starting point for the steady convergence phase.

\vspace{5mm}

\begin{remark}[Suggested Parameter Settings]
    %
    %
    The parameter settings
    \begin{equation}
        \gamma_0 = \frac{\sqrt{\mu_g L}}{2 \|A\|_{\textnormal{op}}^2}, \quad \alpha_0 = \sqrt{\frac{\mu_g}{4 L}}, \quad \taustep_0 = \frac{1}{\sqrt{\mu_g L}}, \quad \theta_0 = \bigg(1+\sqrt{\frac{\mu_g}{4 L}} \bigg)^{-1},
    \end{equation}
    satisfy the warm-up phase constraints in \eqref{eq:acc_const}. With these parameter settings, Theorem \ref{thm:cvacc} states 
    %
    \begin{equation}
    \begin{aligned}
        & \frac{4 (1 - \sqrt{\frac{\mu_g}{4 L}})}{\mu_g} PD_{\mathcal{B}_1 \times \mathcal{B}_2}(\momentumtwo_{T_0}, w_{T_0}) +  \|\primal_{T_0} - \overline{x}\|^2 \\
        \le & \left(1 + \sqrt{\frac{\mu_g}{4 L}} \right)^{-T_0} \Big( \max_{(x',y') \in \mathcal{B}_1 \times \mathcal{B}_2} \frac{4 (1 - \sqrt{\frac{\mu_g}{4 L}})}{\mu_g} ( \mathcal{L}(\momentumtwo_0, y') - \mathcal{L}(x', w_0) ) \\
        &\quad + \frac{2 \|A\|_{\textnormal{op}}^2}{\mu_g L} \|\dualit_0 - y'\|^2 + \|\primal_0 - x'\|^2 \Big) + \frac{4 \|A\|_{\textnormal{op}}^2}{\mu_g L} U_{T_0}.
    \end{aligned}
    \end{equation}
    %
    %
    %
    If $\|A\|_{\textnormal{op}}$ is small, the final term involving $U_{T_0}$ is small and the first term dominates. The convergence rate appears linear until $\left(1 + \sqrt{\frac{\mu_g}{4 L}} \right)^{-T_0} \approx \|A\|_{\textnormal{op}}^2/L$. The number of warm-up iterations, $T_0$, should be chosen so that the terms on the top line of this inequality are of the same order as the terms on the bottom line. The choice $T_0 = \lfloor\sqrt{\frac{L}{\mu_g}} + \frac{\max\{\log(5 L/(2 \|A\|_{\textnormal{op}}^2) ),0\}}{\log(1+\sqrt{\mu_g / (4 L)})} \rfloor$ ensures that each of these terms is $\mathcal{O}(\frac{\|A\|_{\textnormal{op}}^2}{\mu_g L})$.
    In practice, the warm-up phase converges at a linear rate initially and plateaus close to the solution. For high-accuracy solutions, it is necessary to use both phases of the algorithm, but if a low-accuracy solution suffices, then running only the warm-up phase returns a suitable solution very quickly. Indeed, if a solution is desired with primal-dual gap $\epsilon$, and if $\frac{\|A\|_{\textnormal{op}}^2}{\mu_g L} < \epsilon$, then the bound above shows that we converge to a suitable solution at a linear rate.
\end{remark}

\paragraph{Phase 2 (Steady Convergence)} We now consider the iterations $k \ge T_0$ and interpret these iterations as restarting the algorithm from the points $\primal_{T_0}$ and $\dualit_{T_0}$ output from the warm-up phase. We show that each of the terms in inequality \eqref{eq:bothphase} telescopes over multiple iterations. The primal-dual gap terms are
\begin{equation}
\begin{aligned}
    & \frac{\gamma_k}{\alpha_k} ( \mathcal{L}(\momentumtwo_{k+1}, y) - \mathcal{L}(x, w_{k+1}) ) \le \frac{\gamma_k (1-\alpha_k)}{\alpha_k} ( \mathcal{L}(\momentumtwo_k, y) - \mathcal{L}(x, w_k) ) + \cdots.
\end{aligned}
\end{equation}
Taking the supremum in $(x,y)$ over the set $\mathcal{B}_1 \times \mathcal{B}_2$, these terms are non-negative, so they telescope as long as
\begin{equation}
\label{eq:steadyfirst}
    \frac{\gamma_{k+1}(1-\alpha_{k+1})}{\alpha_{k+1}} - \frac{\gamma_k}{\alpha_k} \le 0.
\end{equation}
Similarly, for the terms $\|x_k - x\|^2$ and $\langle A(x_k - x_{k-1}, y_k - y\rangle$ to telescope, we require
\begin{equation}
\label{eq:steadysecond}
    \frac{\gamma_{k+1}}{\taustep_{k+1}} - \frac{\gamma_k (1 + \mu_g \taustep_k)}{\taustep_k} \le 0 \quad \textnormal{and} \quad \theta = \frac{\gamma_{k-1}}{\gamma_k}, 
\end{equation}
respectively. With these conditions met, we can sum inequality \eqref{eq:telelast} from iteration $k=T_0$ to $k=T-1$:
\begin{equation}
\begin{aligned}
    & \frac{\gamma_{T-1}}{\alpha_{T-1}} PD_{\mathcal{B}_1 \times \mathcal{B}_2} (\momentumtwo_T, w_T) + \frac{\gamma_{T-1} (1 + \mu_g \taustep_{T-1})}{2\taustep_{T-1}} \|\primal_T - \overline{x}\|^2 \\
    & \le \max_{(x',y') \in \mathcal{B}_1 \times \mathcal{B}_2} \frac{\gamma_{T_0} (1 - \alpha_{T_0})}{\alpha_{T_0}} ( \mathcal{L}(\momentumtwo_{T_0}, y') - \mathcal{L}(x', w_{T_0}) ) \\
    & + \left(\sum_{k=T_0}^{T-1} \left(\frac{L \alpha_k \gamma_k}{2} - \frac{\gamma_k}{2 \taustep_k} \right) \|\primal_{k+1} - \primal_k\|^2 + \frac{\theta_k^2 \gamma_k^2 \|A\|_{\textnormal{op}}^2}{2} \|\primal_k - \primal_{k-1}\|^2 \right) \\
    & + \frac{\gamma_{T_0}}{2 \taustep_{T_0}} \|\primal_{T_0} - x'\|^2 + \frac{1}{2} \|\dualit_{T_0} - y'\|^2 - \frac{1}{2} \|\dualit_T - y'\|^2 \\
    & - \gamma_{T-1} \langle A (\primal_T - \primal_{T-1}), \dualit_T - y' \rangle.
\end{aligned}
\end{equation}
Because we have restarted the algorithm using the initial points $\primal_{T_0}$, we can use the convention $\primal_{T_0-1} = \primal_{T_0}$ to drop the first inner-product term just as we would say $\primal_{-1} = \primal_{0}$. We bound the final term using Young's inequality:
\begin{equation}
\begin{aligned}
    \gamma_{T-1} \langle A (\primal_T - \primal_{T-1}), \dualit_T - y' \rangle \le \frac{\gamma_{T-1}^2 \|A\|_{\textnormal{op}}^2}{2} \|\primal_T - \primal_{T-1}\|^2 + \frac{1}{2} \|\dualit_T - y'\|^2.
\end{aligned}
\end{equation}
This gives
\begin{equation}
\begin{aligned}
    & \frac{\gamma_{T-1}}{\alpha_{T-1}} PD_{\mathcal{B}_1 \times \mathcal{B}_2} (\momentumtwo_T, w_T) + \frac{\gamma_{T-1} (1 + \mu_g \taustep_{T-1})}{2\taustep_{T-1}} \|\primal_T - \overline{x}\|^2 \\
    & \le \max_{(x',y') \in \mathcal{B}_1 \times \mathcal{B}_2} \frac{\gamma_{T_0} (1 - \alpha_{T_0})}{\alpha_{T_0}} ( \mathcal{L}(\momentumtwo_{T_0}, y') - \mathcal{L}(x', w_{T_0}) ) \\
    & \quad + \sum_{k={  T_0}}^{T} \left(\frac{\gamma_{k-1}^2 \|A\|_{\textnormal{op}}^2}{2} + \frac{L \alpha_{k-1} \gamma_{k-1}}{2} - \frac{\gamma_{k-1}}{2 \taustep_{k-1}} \right) \|\primal_k - \primal_{k-1}\|^2 + \frac{\gamma_{T_0}}{2 \taustep_{T_0}} \|\primal_{T_0} - x'\|^2 \\
    & \quad + \frac{1}{2} \|\dualit_{T_0} - y'\|^2.
\end{aligned}
\end{equation}
In order to drop the terms involving $\|\primal_k - \primal_{k-1}\|^2$, we must have
\begin{equation}
\label{eq:steadythird}
    \frac{\gamma_{k-1}^2 \|A\|_{\textnormal{op}}^2}{2} + \frac{L \alpha_{k-1} \gamma_{k-1}}{2} - \frac{\gamma_{k-1}}{2 \taustep_{k-1}} \le 0, \quad \forall k \ge 1.
\end{equation}
With this condition satisfied, we drop the non-positive terms to obtain
%
\begin{equation}
\begin{aligned}
    & PD_{\mathcal{B}_1 \times \mathcal{B}_2} (\momentumtwo_T, w_T) + \frac{\alpha_{T-1} (1 + \mu_g \taustep_{T-1})}{2\taustep_{T-1}} \|\primal_T - \overline{x}\|^2 \\
    &\le \frac{\alpha_{T-1}}{\gamma_{T-1}} \max_{(x',y') \in \mathcal{B}_1 \times \mathcal{B}_2} \Bigg( \frac{\gamma_{T_0} (1 - \alpha_{T_0})}{\alpha_{T_0}} ( \mathcal{L}(\momentumtwo_{T_0}, y') - \mathcal{L}(x', w_{T_0}) ) + \frac{\gamma_{T_0}}{2 \taustep_{T_0}} \|\primal_{T_0} - x'\|^2 \\
    & \quad + \frac{1}{2} \|\dualit_{T_0} - y'\|^2 \Bigg) \\
\end{aligned}
\end{equation}
%
Using inequality \eqref{eq:warmup} to bound the terms on the top line
proves \eqref{eq:strong}.

\begin{remark}[Suggested Parameter Settings]
    The parameter settings
    \begin{equation}
        \gamma_k = \frac{\mu_g (k+4\sqrt{\mu_g/L})}{8 \|A\|_{\textnormal{op}}^2}, \quad \alpha_k = \frac{\mu_g}{4 \|A\|_{\textnormal{op}}^2 \gamma_k}, \quad \textnormal{and} \quad \taustep_k = \frac{1}{2 \|A\|_{\textnormal{op}}^2 \gamma_k}
    \end{equation}
    satisfy the constraints in \eqref{eq:acc_const_2} and approximately optimize the convergence rate. Inequality \eqref{eq:strong} then implies (by plugging in the suggested parameter choices for both phases):
    \begin{equation}
    \begin{aligned}
        & PD_{\mathcal{B}_1 \times \mathcal{B}_2} (\momentumtwo_T, w_T) + \frac{\mu_g}{4} \|\primal_T - \overline{x}\|^2 \\
        & \le \max_{(x',y') \in \mathcal{B}_1 \times \mathcal{B}_2} \frac{4 L \left(1 + \sqrt{\frac{\mu_g}{4 L}} \right)^{-T_0}}{(T + 4 \sqrt{L/\mu_g} - 1)^2} \Big( \frac{1 - \sqrt{\frac{\mu_g}{4 L}}}{\mu_g} ( \mathcal{L}(\momentumtwo_0, y') - \mathcal{L}(x', w_0) ) \\
        & + \frac{2 \|A\|_{\textnormal{op}}^2}{\mu_g L} \|\dualit_0 - y'\|^2 + \|\primal_0 - x'\|^2 \Big) + \frac{8 \|A\|_{\textnormal{op}}^2}{\mu_g (T + 4 \sqrt{L/\mu_g} - 1)^2)} (2 U_{T_0} + \|\dualit_{T_0} - y'\|^2).
    \end{aligned}
    \end{equation}
    With $T_0 \defeq \lfloor\sqrt{\frac{L}{\mu_g}} + \frac{\max\{\log(5 L/(2 \|A\|_{\textnormal{op}}^2) ),0\}}{\log(1+\sqrt{\mu_g / (4 L)})} \rfloor$, this convergence rate is $\mathcal{O}( \frac{L}{\mu_g (1 + \sqrt{\kappa_P})^T}$ $+ \frac{\|A\|_{\textnormal{op}}^2}{\mu_g T^2} )$.
\end{remark}

\end{proof}

\noindent Due to symmetry, it is easy to prove a similar result when $f^*$, instead of $g$, is $\mu_{f^*}$-strongly convex.

%

\begin{corollary}
\label{cor:cvacc2}
Suppose $\nabla f$ is $1/\mu_{f^*}$-Lipschitz continuous, let $T_0 \ge 1$, and let 
\begin{equation}
V_{T_0} \defeq \max \big\{ \max_{x' \in \mathcal{B}_2} \| x_k - x'\|^2 \big\}_{k=0}^{T_0}.
\end{equation}
%
\emph{(Warm-Up Phase).} For the first $T_0$ iterations, set the parameters {  of Algorithm \ref{alg:acv2}} so that 
\begin{equation}
\frac{1}{\theta_0} \le \frac{1 - L \alpha_0 \gamma_0}{\taustep_0 \gamma_0 \|A\|_{\textnormal{op}}^2}, \quad \frac{1}{\theta_0} \le \frac{1}{1-\alpha_0}, \quad \frac{1}{\theta_0} \le 1 + \mu_{f^*} \gamma_0.
\end{equation}
%
After $T_0$ iterations,
\begin{equation}
\begin{aligned}
    & \frac{\taustep_0 (1-\alpha_0)}{\alpha_0} PD_{\mathcal{B}_1 \times \mathcal{B}_2}(v_{T_0}, w_{T_0}) + \frac{\taustep_0}{2 \gamma_0} \|\primal_{T_0} - \overline{x}\|^2 \le \theta^{T_0} C' + \Big(\frac{1 - \theta^{T_0}}{1/\theta-1} \Big) \frac{\mu_g \taustep_0}{2} V_{T_0} ,
\end{aligned}
\end{equation}
where
\begin{equation}
C' \defeq \max_{(x',y') \in \mathcal{B}_1 \times \mathcal{B}_2} \frac{\taustep_0 (1 - \alpha_0)}{\alpha_0} ( \mathcal{L}(\momentumtwo_0, y') - \mathcal{L}(x', w_0) ) + \frac{1}{2} \|\primal_0 - x'\|^2 + \frac{\taustep_0}{2 \gamma_0} \|\dualit_0 - y'\|^2,
\end{equation}
{  and $\overline{x} \in \mathcal{B}_1$ and $\overline{y} \in \mathcal{B}_2$ are such that} $\mathcal{L}(\momentumtwo_T, \overline{y}) - \mathcal{L}(\overline{x}, w_T) = PD_{\mathcal{B}_1 \times \mathcal{B}_2}(\momentumtwo_T,$ $w_T)$.

\vspace{5mm}

\noindent \emph{(Steady Convergence Phase).} Following the warm-up phase, set the parameters {  of Algorithm \ref{alg:acv2}} so that 
%
%
\begin{equation}
    \frac{\taustep_{k+1}(1-\alpha_{k+1})}{\alpha_{k+1}} - \frac{\taustep_k}{\alpha_k} \le 0, ~ \frac{\taustep_{k+1}}{\gamma_{k+1}} - \frac{\taustep_k (1 + \mu_{f^*} \gamma_k)}{\gamma_k} \le 0, ~ {  \frac{\|A\|_{\textnormal{op}}^2}{2} + \frac{L \alpha_k}{2 \taustep_k} - \frac{1}{2 \gamma_k \taustep_k} } \le 0.
\end{equation}
After $T-1$ iterations of Algorithm \ref{alg:acv} with $T \ge T_0$, the following inequality holds:
\begin{equation}
\begin{aligned}
& PD_{\mathcal{B}_1 \times \mathcal{B}_2} (\momentumtwo_T, w_T) + \frac{\alpha_{T-1} (1 + \mu_{f^*} \gamma_{T-1})}{2\gamma_{T-1}} \|\dualit_T - \overline{y}\|^2 \\
& \le \frac{\alpha_{T-1}}{\taustep_{T-1}} \Bigg( \theta^{T_0} {  C'} + \Big(\frac{1 - \theta^{T_0}}{\rho-1} \Big) \frac{\mu_{f^*} \tau_0}{2} V_{T_0} + \frac{1}{2} \|\primal_{T_0} - x'\|^2 \Bigg).
\end{aligned}
\end{equation}

\end{corollary}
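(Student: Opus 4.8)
The plan is to treat this corollary as the primal--dual mirror image of Theorem \ref{thm:cvacc}: the statement is obtained by the formal interchange $(\primal, g, \taustep_k, \mu_g) \leftrightarrow (\dualit, f^*, \gamma_k, \mu_{f^*})$, so I would re-run the entire proof of Theorem \ref{thm:cvacc} with these roles swapped. The first task is to produce the symmetric counterpart of the one-iteration inequality \eqref{eq:telelast}: whereas \eqref{eq:telelast} scales the Lagrangian gap by $\gamma_k/\alpha_k$ and equips the primal distance $\|\primal_{k+1}-\primal\|^2$ with the contraction factor $(1+\mu_g\taustep_k)$, the symmetric version scales the gap by $\taustep_k/\alpha_k$ and equips the dual distance $\|\dualit_{k+1}-\dualit\|^2$ with the factor $(1+\mu_{f^*}\gamma_k)$. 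With this inequality in hand, setting $\mu_g = 0$ places us in exactly the dual-strongly-convex regime of the corollary, in which the dual distance is the one that contracts and the primal distance is the one that does not.

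For the warm-up phase I would follow the derivation leading to \eqref{eq:warmup} verbatim, with the contraction now driven by the strong convexity of $f^*$. Since the primal distance no longer contracts when $\mu_g = 0$, I would add a multiple of $\|\primal_{k+1}-\primal\|^2$ to both sides --- the mirror of the theorem's added term $\tfrac{\mu_g\taustep_0}{2}\|\dualit_{k+1}-\dualit\|^2$ --- so that the non-contracting primal distance acquires a $(1+\mu_{f^*}\gamma_0)$ factor matching the target rate, while the same term reappears on the right as a perturbation. The geometric rate $\rho = 1/\theta_0$ is then constrained by the three warm-up conditions with $1+\mu_{f^*}\gamma_0$ replacing $1+\mu_g\taustep_0$ and $\taustep_0,\gamma_0$ interchanged; summing the contraction $\rho E_{k+1}\le E_k$, bounding the primal-distance perturbation by $V_{T_0}$, and absorbing the residual cross term $\langle A(\primal_{T_0}-\primal_{T_0-1}),\dualit_{T_0}-\dualit\rangle$ by Young's inequality (now into the primal distance rather than the dual) yields the stated warm-up bound with $C'$ in place of $C$.

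For the steady-convergence phase I would sum the symmetric one-iteration inequality from $k=T_0$ to $k=T-1$ and impose the telescoping conditions listed in the corollary: the gap telescopes under $\tfrac{\taustep_{k+1}(1-\alpha_{k+1})}{\alpha_{k+1}}\le\tfrac{\taustep_k}{\alpha_k}$, the dual distance carries a growing weight under $\tfrac{\taustep_{k+1}}{\gamma_{k+1}}\le\tfrac{\taustep_k(1+\mu_{f^*}\gamma_k)}{\gamma_k}$, the cross term telescopes with $\theta_k=\taustep_{k-1}/\taustep_k$, and the third condition makes the collected coefficients of $\|\primal_{k+1}-\primal_k\|^2$ non-positive. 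After a final Young split of the boundary cross term and discarding the non-positive contributions, dividing through by $\taustep_{T-1}/\alpha_{T-1}$ leaves the primal-dual gap plus the weighted dual distance on the left with prefactor $\alpha_{T-1}/\taustep_{T-1}$; substituting the warm-up bound for the iterate quantities at $T_0$ then closes the argument, exactly mirroring the passage from the telescoped sum to \eqref{eq:strong}.

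The main obstacle is the very first step, establishing the symmetric master inequality, because the iteration is not literally symmetric under $\primal\leftrightarrow\dualit$: the smooth term $h$ and the Nesterov extrapolation (the variables $\momentumone_{k+1},\momentumtwo_{k+1},w_{k+1}$) are attached to the primal side only, and the gap in \eqref{eq:telelast} is genuinely scaled by $\gamma_k/\alpha_k$ rather than $\taustep_k/\alpha_k$. I would therefore re-derive Lemmas \ref{lem:cvinit} and \ref{lem:cvtele} carrying the $\mu_{f^*}$-term through, and verify that the smoothness contribution still isolates as the single sign-definite term $\tfrac{\gamma_k}{2\taustep_k}(L\alpha_k\taustep_k-1)\|\primal_{k+1}-\primal_k\|^2$, which the third parameter condition absorbs regardless of whether the strong convexity lives in the primal or the dual. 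Once this coefficient bookkeeping is confirmed, the warm-up and steady-phase arguments are identical to those of Theorem \ref{thm:cvacc} up to the relabelling, which is precisely why the result holds ``by symmetry.''
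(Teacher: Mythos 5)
Your proposal is correct and follows essentially the same route as the paper: the paper likewise sets $\mu_g = 0$ in \eqref{eq:telelast} and then mirrors the two-phase argument of Theorem \ref{thm:cvacc} with $\tau_k \leftrightarrow \gamma_k$ and $\mu_g \leftrightarrow \mu_{f^*}$ interchanged. The only economy you miss is that Lemmas \ref{lem:cvinit} and \ref{lem:cvtele} were already stated and proved carrying the $\mu_{f^*}$-term, so the "symmetric master inequality" you propose to re-derive is obtained in one line by multiplying \eqref{eq:telelast} by $\tau_k/\gamma_k$ --- the asymmetry of the algorithm (the smooth term $h$ and the extrapolation living on the primal side) is harmless precisely because it is already baked into that inequality.
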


\begin{proof}
We set $\mu_g = 0$ in equation \eqref{eq:telelast} and  multiply by $\taustep_k / \gamma_k$.
\begin{equation}
\begin{aligned}
    & \frac{\taustep_k}{\alpha_k} ( \mathcal{L}(\momentumtwo_{k+1}, y) - \mathcal{L}(x, w_{k+1}) ) \\
    \le & \frac{\taustep_k (1 - \alpha_k)}{\alpha_k} ( \mathcal{L}(\momentumtwo_k, y) - \mathcal{L}(x, w_k) ) + \frac{L \alpha_k \taustep_k - 1}{2} \|\primal_{k+1} - \primal_k\|^2 \\
    & \quad + \frac{\theta_k^2 \taustep_k \gamma_k \|A\|_{\textnormal{op}}^2}{2} \|\primal_k - \primal_{k-1}\|^2 + \frac{1}{2} (\|\primal_k - x\|^2 - \|\primal_{k+1} - x\|^2) + \frac{\taustep_k}{2 \gamma_k} \|\dualit_k - y\|^2 \\
    & \quad - \frac{\taustep_k (1 + \mu_{f^*} \gamma_k)}{2 \gamma_k} \|\dualit_{k+1} - y\|^2 + \taustep_k \theta_k \langle A (\primal_k - \primal_{k-1}), \dualit_k - y \rangle \\
    & \quad - \taustep_k \langle A (\primal_{k+1} - \primal_k), \dualit_{k+1} - y \rangle.
\end{aligned}
\end{equation}
This is similar to inequality \eqref{eq:bothphase} in the proof of Theorem \ref{thm:cvacc}, but $\mu_g$ is replaced with $\mu_{f^*}$, and $\taustep_k$ is interchanged with $\gamma_k$. Adjusting the parameters accordingly, we follow the proof of Theorem \ref{thm:cvacc} to obtain analogous convergence results in both phases of the algorithm.
\end{proof}

\subsection{Acceleration for smooth and strongly convex objectives}

If both $g$ and $f^*$ are strongly convex, then we prove a linear convergence rate that improves on the linear rate of Condat--V\~u. These rates depend on two condition numbers.
When $f^*$ is $\mu_{f^*}$-strongly convex, then $f$ is differentiable and its gradient is $1/\mu_{f^*}$-Lipschitz continuous. It follows that the gradient of $f \circ A$ is $\|A\|_{\textnormal{op}}^2/\mu_{f^*}$-Lipschitz continuous. The quantities $\kappa_P = L / \mu_g$ and $\kappa_{PD} = \|A\|_{\textnormal{op}}^2/(\mu_{f^*}\mu_g)$ are then the condition numbers of the objectives $h + g$ and $f \circ A + g$, respectively. Because $f$ is smooth, it is possible to apply APGD to the objective $f \circ A + g + h$ using only the proximal operator of $g$, achieving a convergence rate of $\mathcal{O}( (1 + \sqrt{1/\kappa_P} + \sqrt{1/\kappa_{PD}})^{-T} )$.\footnote{This is because the convergence rate of APGD is $\mathcal{O}((1 + \sqrt{\mu_g/L'})^{-T})$, where $L' = L + \|A\|^2_{\textnormal{op}}/\mu_{f^*}$ 
is the Lipschitz constant of the gradient of $f \circ A + h$} Hence, there is no justification to use the non-accelerated Condat--V\~u algorithm, which has a convergence rate of $\mathcal{O}( (1 + \sqrt{1/\kappa_{PD}} + 1/\kappa_P)^{-T})$ \cite{CPrates}. 

%
%
In contrast, ACV is a competitive choice for minimizing these objectives. In this section, we show that ACV achieves a convergence rate of $\mathcal{O}( (1 + \sqrt{1/\kappa_{PD}} + \sqrt{1/\kappa_P})^{-T} )$. 


\begin{center}
\begin{minipage}{0.975\linewidth}
\begin{algorithm}[H]
\caption{Accelerated Condat--V\~u for Stongly Convex and Smooth Objectives}
\label{alg:acv4}
  \begin{algorithmic}[1]
  \State Initialize $\primal_0, \momentumtwo_0 \in \mathcal{X}$, $\dualit_0 \in \mathcal{Y}$ and set $\gamma, \tau, \alpha$, and $\theta$ so that the constraints in \eqref{eq:strongconstthm} are satisfied.
  \State 
  $\textnormal{ACV}(\primal_0,\dualit_0,v_0,\gamma,\taustep,\alpha,\theta,T)$
  \State $w_{T} \leftarrow \alpha_{T} \dualit_{T} + (1 - \alpha_{T}) \momentumtwo_{T-1}$.
  \Ensure $(\momentumtwo_T, w_T)$.
  \end{algorithmic}
\end{algorithm}
\end{minipage}
\end{center}


\begin{theorem}
\label{thm:cvlin}
Suppose $g$ is $\mu_g$-strongly convex and $f^*$ is $\mu_{f^*}$-strongly convex.
After $T-1$ iterations of Algorithm \ref{alg:acv4} with parameters satisfying the constraints 
\begin{equation}
\label{eq:strongconstthm}
    \frac{1}{\theta} \le \frac{1}{1 - \alpha}, \quad \frac{1}{\theta} \le 1 + \mu_{f^*} \gamma, \quad \frac{1}{\theta} \le 1 + \mu_g \tau, \quad \frac{1}{\theta} \le \frac{1 - L \alpha \taustep}{\gamma \taustep \theta^2 \|A\|_{\textnormal{op}}^2},
\end{equation}
%
%
the following inequality holds:
\begin{equation}
\label{eq:strongres}
\begin{aligned}
    & \frac{\gamma (1-\alpha)}{\alpha} PD_{\mathcal{B}_1 \times \mathcal{B}_2}(\momentumtwo_T,w_T) + \frac{\gamma}{2 \taustep} \|\primal_T - \overline{x}\|^2 \le \theta^T C,
\end{aligned}
\end{equation}
where $C$ is as defined in \eqref{eq:C}, and $\overline{x} \in \mathcal{B}_1$ is such that $\mathcal{L}(\momentumtwo_T, \overline{y}) - \mathcal{L}(\overline{x}, w_T) = PD_{\mathcal{B}_1 \times \mathcal{B}_2}(\momentumtwo_T,$ $w_T)$ for some $\overline{y} \in \mathcal{B}_2$.
\end{theorem}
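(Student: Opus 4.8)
The strategy is to mirror the warm-up phase (Phase 1) of Theorem~\ref{thm:cvacc}, except that genuine dual strong convexity $\mu_{f^*}>0$ now does the work that the injected $\tfrac{\mu_g \tau_0}{2}\|\dualit_{k+1}-y\|^2$ correction did there, so no residual term survives and no two-phase split or warm-up count is needed. Because every parameter is held constant, I would start from the one-iteration inequality~\eqref{eq:telelast} of Lemma~\ref{lem:cvtele}, keeping both $\mu_g$ and $\mu_{f^*}$ nonzero, and package the right-hand side into a single Lyapunov functional
\[
  E_k(x,y) \defeq \frac{\gamma(1-\alpha)}{\alpha}\big(\mathcal{L}(\momentumtwo_k,y)-\mathcal{L}(x,w_k)\big) + \frac{\gamma}{2\taustep}\|\primal_k-x\|^2 + \frac12\|\dualit_k-y\|^2 + \frac{\theta^2\gamma^2\|A\|_{\textnormal{op}}^2}{2}\|\primal_k-\primal_{k-1}\|^2 + \gamma\theta\langle A(\primal_k-\primal_{k-1}),\dualit_k-y\rangle .
\]

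The core step is the one-step contraction $E_{k+1}(x,y)\le\theta\,E_k(x,y)$, i.e.\ $\rho E_{k+1}\le E_k$ with $\rho=1/\theta$. I would move the ``$k+1$'' quantities in~\eqref{eq:telelast} (the $\|\primal_{k+1}-x\|^2$, $\|\dualit_{k+1}-y\|^2$, $\|\primal_{k+1}-\primal_k\|^2$, and $-\gamma\langle A(\primal_{k+1}-\primal_k),\dualit_{k+1}-y\rangle$ terms) to the left, so that the left is a candidate for $\rho E_{k+1}$ and the right is exactly $E_k$. Comparing coefficients, the four conditions in~\eqref{eq:strongconstthm} are precisely what is required: $1/\theta\le 1/(1-\alpha)$ dominates the primal-dual gap term (nonnegative along the trajectory, handled as in the warm-up phase), $1/\theta\le 1+\mu_g\taustep$ dominates $\|\primal_{k+1}-x\|^2$, $1/\theta\le 1+\mu_{f^*}\gamma$ dominates $\|\dualit_{k+1}-y\|^2$, and $1/\theta\le (1-L\alpha\taustep)/(\gamma\taustep\theta^2\|A\|_{\textnormal{op}}^2)$ makes the surviving $\tfrac{\gamma}{2\taustep}(1-L\alpha\taustep)\|\primal_{k+1}-\primal_k\|^2$ dominate $\rho$ times the momentum term of $E_{k+1}$. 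The one point I would flag explicitly is that the cross inner-product telescopes by an exact identity rather than an inequality: the transferred term $\gamma\langle A(\primal_{k+1}-\primal_k),\dualit_{k+1}-y\rangle$ equals $\rho\,\gamma\theta\langle A(\primal_{k+1}-\primal_k),\dualit_{k+1}-y\rangle$ exactly because $\rho\theta=1$, which is why the rate is forced to be $\rho=1/\theta$ and why no condition $\theta=\gamma_{k-1}/\gamma_k$ is needed in the constant-step regime.

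Iterating the contraction over all $T$ steps (there is no steady phase here) gives $E_T(x,y)\le\theta^T E_0(x,y)$ for each admissible $(x,y)$. At $k=0$ the convention $\primal_{-1}=\primal_0$ annihilates the momentum and cross terms, so $E_0(x,y)$ coincides with the bracket defining $C$ in~\eqref{eq:C}, whence $\theta^T E_0(x,y)\le\theta^T C$. To read off~\eqref{eq:strongres} from $E_T$, I would discard the nonnegative tail: Young's inequality gives $\tfrac{\theta^2\gamma^2\|A\|_{\textnormal{op}}^2}{2}\|\primal_T-\primal_{T-1}\|^2 + \tfrac12\|\dualit_T-y\|^2 + \gamma\theta\langle A(\primal_T-\primal_{T-1}),\dualit_T-y\rangle\ge 0$, so that $E_T(x,y)\ge \frac{\gamma(1-\alpha)}{\alpha}(\mathcal{L}(\momentumtwo_T,y)-\mathcal{L}(x,w_T))+\frac{\gamma}{2\taustep}\|\primal_T-x\|^2$; evaluating at the pair $(\overline{x},\overline{y})$ that realizes the primal-dual gap yields the claim.

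The coefficient bookkeeping and the two uses of Young's inequality are routine. The genuinely delicate part is the choice of $E_k$: it must remain nonnegative after the indefinite cross term is absorbed (again by Young, into the dual and momentum terms), and all five groups of terms must contract at the common geometric rate $\theta$. The main obstacle is therefore confirming that a single constant $\rho=1/\theta$ can meet the four rate requirements simultaneously---but this is exactly the feasibility encoded in~\eqref{eq:strongconstthm}, so once the Lyapunov functional is identified the proof collapses to a one-shot geometric telescoping, strictly simpler than the two-phase argument required when $\mu_{f^*}=0$.
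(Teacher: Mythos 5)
Your proposal is correct and takes essentially the same route as the paper's proof: the Lyapunov functional $E_k$ you write down is precisely the grouping the paper uses to derive the one-step contraction $\rho E_{k+1} \le E_k$ with $\rho = 1/\theta$ under \eqref{eq:strongconstthm}, followed by geometric telescoping with the convention $\primal_{-1} = \primal_0$ (so $E_0$ reduces to the bracket defining $C$) and a final application of Young's inequality to discard the nonnegative momentum/dual/cross-term tail before taking the supremum over $\mathcal{B}_1 \times \mathcal{B}_2$. Your explicit observation that the cross inner-product transfers exactly because $\rho\theta = 1$, forcing $\rho = 1/\theta$, is a correct reading of the constraint the paper states as $\rho = 1/\theta$ in \eqref{eq:strongconst}.
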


\begin{proof}
We begin with the inequality of Lemma \ref{lem:cvtele} with $\mu_g, \mu_{f^*} > 0$. Because the parameters are fixed, we drop their dependence on $k$.
\begin{equation}
\begin{aligned}
    & \frac{\gamma}{\alpha} ( \mathcal{L}(\momentumtwo_{k+1}, y) - \mathcal{L}(x, w_{k+1}) ) \\
    & \le \frac{\gamma (1 - \alpha)}{\alpha} ( \mathcal{L}(\momentumtwo_k, y) - \mathcal{L}(x, w_k) ) + \frac{\gamma}{2 \taustep} \left( L \alpha \taustep - 1 \right) \|\primal_{k+1} - \primal_k\|^2 + \frac{1}{2} \|\dualit_k - y\|^2 \\
    & + \frac{\theta^2 \gamma^2 \|A\|_{\textnormal{op}}^2}{2} \|\primal_k - \primal_{k-1}\|^2 + \frac{\gamma}{2 \taustep} (\|\primal_k - x\|^2 - (1 + \mu_g \taustep) \|\primal_{k+1} - x\|^2) \\
    & - \frac{1 + \mu_{f^*} \gamma}{2} \|\dualit_{k+1} - y\|^2 + \gamma \theta \langle A (\primal_k - \primal_{k-1}), \dualit_k - y \rangle - \gamma \langle A (\primal_{k+1} - \primal_k), \dualit_{k+1} - y \rangle.
\end{aligned}
\end{equation}

Rearranging,
\begin{equation}
\begin{aligned}
    & \frac{\gamma}{\alpha} ( \mathcal{L}(\momentumtwo_{k+1}, y) - \mathcal{L}(x, w_{k+1}) ) + \frac{1 + \mu_{f^*} \gamma}{2} \|\dualit_{k+1} - y\|^2 + \frac{\gamma (1 + \mu_g \taustep)}{2 \taustep} \|\primal_{k+1} - x\|^2 \\
    & \quad + \frac{\gamma}{2 \taustep} \left( 1 - L \alpha \taustep \right) \|\primal_{k+1} - \primal_k\|^2 + \gamma \langle A (\primal_{k+1} - \primal_k), \dualit_{k+1} - y \rangle \\
    \le & \frac{\gamma (1 - \alpha)}{\alpha} ( \mathcal{L}(\momentumtwo_k, y) - \mathcal{L}(x, w_k) ) + \frac{1}{2} \|\dualit_k - y\|^2 + \frac{\gamma}{2 \taustep} \|\primal_k - x\|^2 \\
    & \quad + \frac{\theta^2 \gamma^2 \|A\|_{\textnormal{op}}^2}{2} \|\primal_k - \primal_{k-1}\|^2 + \gamma \theta \langle A (\primal_k - \primal_{k-1}), \dualit_k - y \rangle.
\end{aligned}
\end{equation}
Just as in the analysis of the warm-up phase in Algorithm \ref{alg:acv2}, we would like to derive an inequality of the form $\rho E_{k+1} \le E_k$ for some functional $E_k$ and constant rate of decrease $\rho > 1$. The rate of decrease, $\rho$, is limited by the decrease of each term, giving the constraints
\begin{equation}
\label{eq:strongconst}
    \rho \le \frac{1}{1 - \alpha}, \quad \rho \le 1 + \mu_{f^*} \gamma, \quad \rho \le 1 + \mu_g \tau, \quad \rho \le \frac{1 - L \alpha \taustep}{\gamma \taustep \theta^2 \|A\|_{\textnormal{op}}^2}, \quad \rho = 1 / \theta.
\end{equation}
%
%
With these constraints satisfied, we have the inequality
\begin{equation}
\begin{aligned}
    & \rho \Bigg( \frac{\gamma(1-\alpha)}{\alpha} ( \mathcal{L}(\momentumtwo_{k+1}, y) - \mathcal{L}(x, w_{k+1}) ) + \frac{1}{2} \|\dualit_{k+1} - y\|^2 + \frac{\gamma}{2 \taustep} \|\primal_{k+1} - x\|^2 \\
    & \quad + \frac{\theta^2 \gamma^2 \|A\|_{\textnormal{op}}^2}{2} \|\primal_{k+1} - \primal_k\|^2 + \gamma \theta \langle A (\primal_{k+1} - \primal_k), \dualit_{k+1} - y \rangle \Bigg) \\
    \le & \frac{\gamma (1 - \alpha)}{\alpha} ( \mathcal{L}(\momentumtwo_k, y) - \mathcal{L}(x, w_k) ) + \frac{1}{2} \|\dualit_k - y\|^2 + \frac{\gamma}{2 \taustep} \|\primal_k - x\|^2 \\
    & \quad + \frac{\theta^2 \gamma^2 \|A\|_{\textnormal{op}}^2}{2} \|\primal_k - \primal_{k-1}\|^2 + \gamma \theta \langle A (\primal_k - \primal_{k-1}), \dualit_k - y \rangle.
\end{aligned}
\end{equation}
Multiplying this inequality by $\rho^k$ and summing from $k=0$ to $k=T-1$, we obtain
\begin{equation}
\begin{aligned}
\label{eq:adj}
    & \rho^T \Bigg( \frac{\gamma (1-\alpha)}{\alpha} ( \mathcal{L}(\momentumtwo_T, y) - \mathcal{L}(x, w_T) ) + \frac{1}{2} \|\dualit_T - y\|^2 + \frac{\gamma}{2 \taustep} \|\primal_T - x\|^2 \\
    & \quad + \frac{\theta^2 \gamma^2 \|A\|_{\textnormal{op}}^2}{2} \|\primal_T - \primal_{T-1}\|^2 + \gamma \theta \langle A (\primal_T - \primal_{T-1}), \dualit_T - y \rangle \Bigg) \\
    \le & \frac{\gamma (1 - \alpha)}{\alpha} ( \mathcal{L}(\momentumtwo_0, y) - \mathcal{L}(x, w_0) ) + \frac{1}{2} \|\dualit_0 - y\|^2 + \frac{\gamma}{2 \taustep} \|\primal_0 - x\|^2.
\end{aligned}
\end{equation}
We bound the inner-product on the left using Young's inequality, obtaining
\begin{equation}
\begin{aligned}
    & \frac{\gamma (1-\alpha)}{\alpha} ( \mathcal{L}(\momentumtwo_T, y) - \mathcal{L}(x, w_T) ) + \frac{\gamma}{2 \taustep} \|\primal_T - x\|^2 \\
    \le &  \theta^T \Big( \frac{\gamma}{2 \taustep} \|\primal_0 - x\|^2 + \frac{1}{2} \|\dualit_0 - y\|^2 + \frac{\gamma (1 - \alpha)}{\alpha} ( \mathcal{L}(\momentumtwo_0, y) - \mathcal{L}(x, w_0) ) \Big).
\end{aligned}
\end{equation}
Taking the supremum of $(x,y) \in \mathcal{B}_1 \times \mathcal{B}_2$ proves the assertion.
%
\end{proof}
\begin{remark}[Suggested Parameter Settings]
    Let $\overline{L} = \|A\|_{\textnormal{op}}^2/\mu_{f^*} +  L$ be the Lipschitz constant of $f \circ A + h$. The choices
    \begin{equation}\label{scx_recom_step_sizes}
    \begin{aligned}
        \gamma = \sqrt{\frac{\mu_g}{\mu_{f^*}^2 \overline{L}}}, \quad \taustep = \sqrt{\frac{1}{\overline{L} \mu_g}}, \quad \textnormal{and} \quad \alpha = \sqrt{\frac{\mu_g}{\overline{L}}}.
    \end{aligned}
    \end{equation}
    approximately maximize the convergence rate,
    \begin{equation}
        \rho = 1 + \sqrt{\frac{\mu_g}{\overline{L}}}.
    \end{equation}
    These parameter settings satisfy all of the constraints in \eqref{eq:strongconst}. The only non-trivial constraint to check is
    { 
    \begin{align*}
        \frac{1 - L \alpha \taustep}{\gamma \taustep \theta^2 \|A\|_{\textnormal{op}}^2} &= \frac{1 - L/\overline{L}}{ (1/(\mu_{f^*} \overline{L})) \theta^2 \|A\|_{\textnormal{op}}^2} \\
        &= (1 - L / \overline{L}) (\mu_{f^*} \overline{L}) \rho^2 / \|A\|_{\textnormal{op}}^2 \\
        &= \left(1 - \frac{L}{\|A\|_{\textnormal{op}}^2 / \mu_{f^*} + L}\right) \left(\frac{\|A\|_{\textnormal{op}}^2 + L \mu_{f^*}}{\|A\|_{\textnormal{op}}^2}\right) \rho^2 \\
        &= \left(1 - \frac{1}{\|A\|_{\textnormal{op}}^2 / (L \mu_{f^*}) + 1}\right) \left(1 + \frac{L \mu_{f^*}}{\|A\|_{\textnormal{op}}^2}\right) \rho^2 \\
        &= \rho^2 \\
        &\ge \rho.
    \end{align*}
    }
    With these parameter settings, inequality \eqref{eq:strongres} reads
    \begin{equation}
    \begin{aligned}
        & \frac{\gamma (1-\alpha)}{\alpha} PD_{\mathcal{B}_1 \times \mathcal{B}_2}(\momentumtwo_T,w_T) + \frac{\gamma}{2 \tau} \|\primal_T - \overline{x}\|^2 \le {\left(1 + \min \{ \sqrt{1/(2 \kappa_{PD})}, \ \sqrt{1/(2 \kappa_P)} \} \right)^{-T}} C,
    \end{aligned}
    \end{equation}
    where $C$ is defined in \eqref{eq:C}. This proves the desired convergence rate of $\mathcal{O}( (1 + \min \sqrt{1/\kappa_{PD}} + \sqrt{1/\kappa_P})^{-T})$.
\end{remark}
\begin{remark}[Convergence of the Dual Variable]
    As stated, Theorem \ref{thm:cvlin} provides a linear convergence rate on the primal variables, but not on the dual variables. Given that the primal iterates converge, a minor adjustment to \eqref{eq:adj} provides a rate on the convergence of the dual iterates as well. Applying Young's inequality to \eqref{eq:adj}, for any $\epsilon > 0$, we have
    \begin{equation}
    \begin{aligned}
        & \frac{\gamma (1-\alpha)}{\alpha} ( \mathcal{L}(\momentumtwo_T, y) - \mathcal{L}(x, w_T) ) + \frac{\gamma}{2 \taustep} \|\primal_T - x\|^2 + \frac{1 - \epsilon}{2} \|\dualit_T - y\|^2 \\
        & \quad - \frac{\theta^2 \gamma^2 \|A\|_{\textnormal{op}}^2 (1 - \epsilon)}{2} \|x_T - x_{T-1}\|^2 \\
        \le &  \theta^T \Big( \frac{\gamma}{2 \taustep} \|\primal_0 - x\|^2 + \frac{1}{2} \|\dualit_0 - y\|^2 + \frac{\gamma (1 - \alpha)}{\alpha} ( \mathcal{L}(\momentumtwo_0, y) - \mathcal{L}(x, w_0) ) \Big).
    \end{aligned}
    \end{equation}
    We have traded a term of the form $\|\primal_T - \primal_{T-1}\|^2$ for a term of the form $\|\dualit_T - y\|^2$. Because Theorem \ref{thm:cvlin} guarantees convergence of the primal iterates, $\primal_{T-1}$ will eventually lie in $\mathcal{B}_1$, so after maximizing $(x,y)$ over the set $\mathcal{B}_1 \times \mathcal{B}_2$, the term involving $\|\primal_T - \primal_{T-1}\|^2$ can be made small compared to $\| \primal_T - x \|^2$. This guarantees a linear convergence rate on the primal and dual iterates.
\end{remark}
We now demonstrate the superiority of the Accelerated Condat--V\~u algorithm in numerical experiments.

\begin{figure}[!t]
\centering
\subfloat[\texttt{australian}]{ \includegraphics[width=0.45\linewidth]{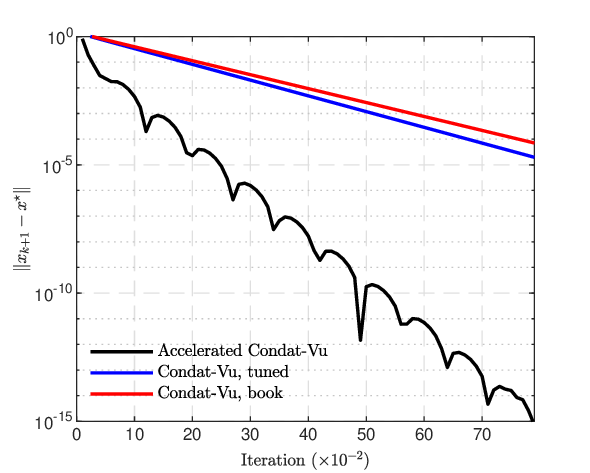}}
\subfloat[\texttt{mushrooms}]{ \includegraphics[width=0.45\linewidth]{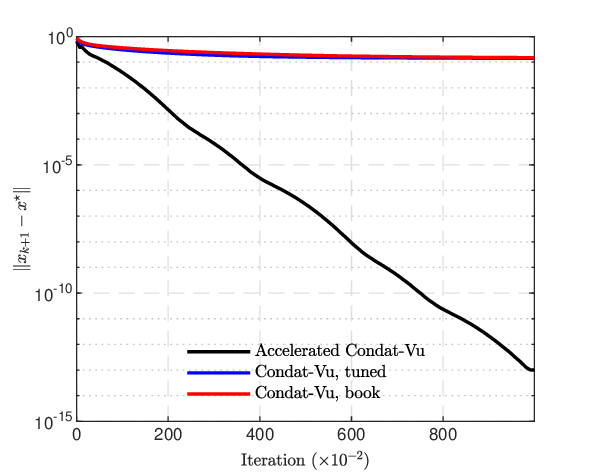} } 

\subfloat[\texttt{phishing}]{ \includegraphics[width=0.45\linewidth]{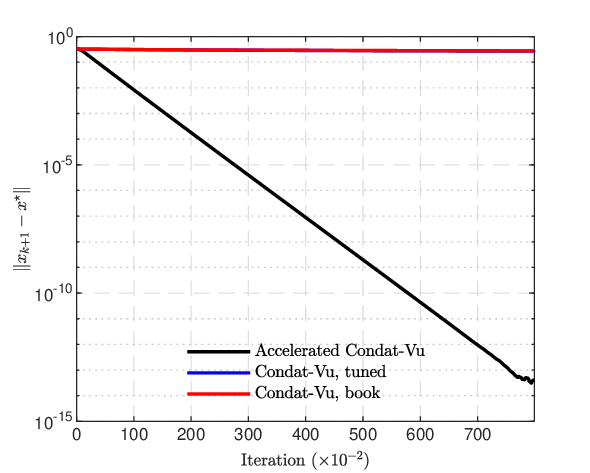} }   
\hspace{1pt}
\subfloat[\texttt{ijcnn1}]{ \includegraphics[width=0.45\linewidth]{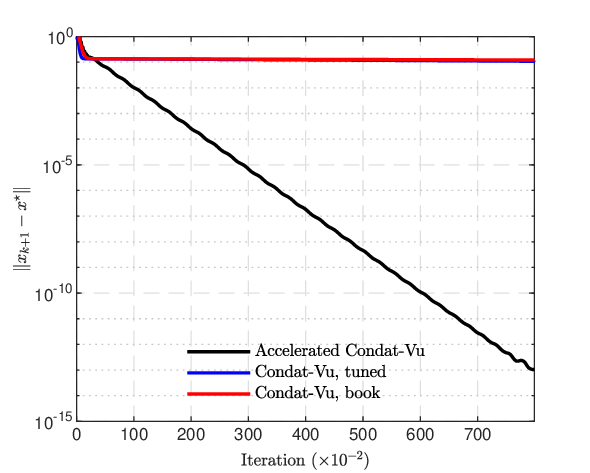} }  \\
\caption{Performance comparison of the proposed Accelerated Condat--V\~u, Condat--V\~u with standard parameters settings, and Condat--V\~u with tuned parameters on problem \eqref{eq:ggfen}. These results show that in the regime where $L \gg \|A\|_{\textnormal{op}}^2 / \mu_{f^*}$, ACV significantly outperforms CV, matching our theoretical convergence rates.  }
\label{fig:cv-elastic-net}
\end{figure}

\begin{figure}[!t]
\centering
\subfloat[\texttt{australian}]{ \includegraphics[width=0.45\linewidth]{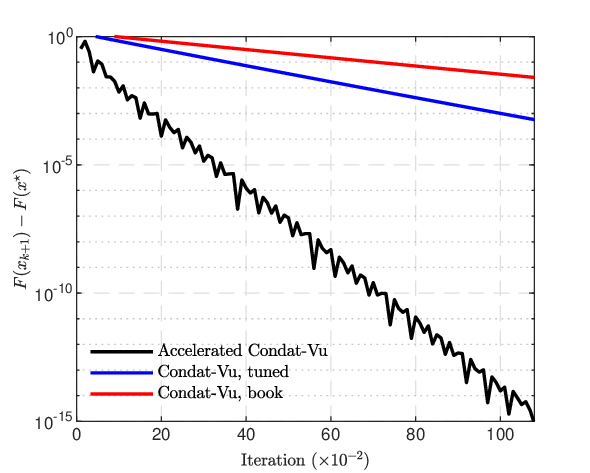}}
\subfloat[\texttt{mushrooms}]{ \includegraphics[width=0.45\linewidth]{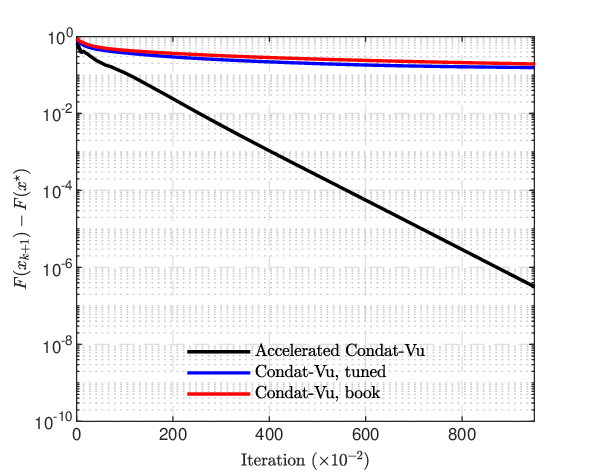} } 

\subfloat[\texttt{phishing}]{ \includegraphics[width=0.45\linewidth]{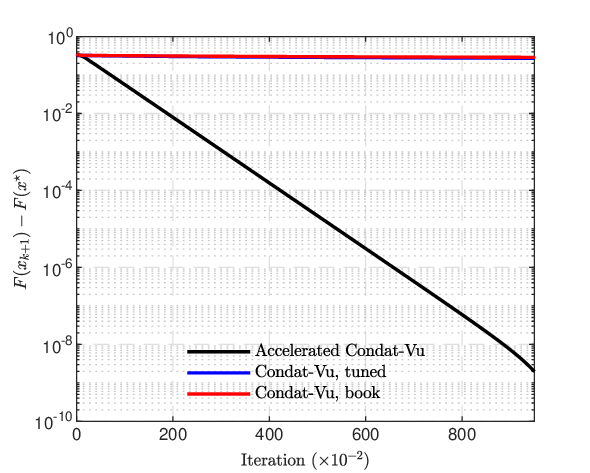} }   
\hspace{1pt}
\subfloat[\texttt{ijcnn1}]{ \includegraphics[width=0.45\linewidth]{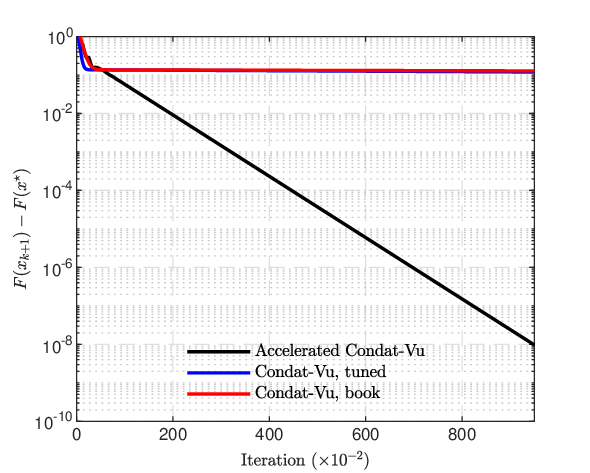} }  \\
\caption{
Performance comparison of ACV, Condat--V\~u with standard parameters settings \cite[Eq.\! 48]{CPrates}, and Condat--V\~u with tuned parameters on problem \eqref{eq:ggfen} with $\lambda_3 \to \infty$. These results show that in the regime where $L \gg  \|A\|_{\textnormal{op}}^2$, ACV significantly outperforms CV and matches our theoretical convergence rates.}
\label{fig:cv-elastic-net-nonsmooth}
\end{figure}

\section{Numerical Experiments}
\label{sec:ex}

We test the ACV algorithm using a common problem in statistical modeling and a common problem in image processing. Our experiments show that the Accelerated Condat--V\~u algorithm offers significant performance benefits when $L/\|A\|_{\textnormal{op}}$ is large, matching the near-optimal convergence rates predicted by our theory. When this ratio is small, ACV performs similarly to the non-accelerated Condat--V\~u algorithm that already achieves near-optimal convergence rates.

\subsection{Fused elastic net with smoothing}

We formulate the fused elastic net problem as
\begin{equation}
\label{eq:ggfen}
    \min_{x \in \mathbb{R}^d} \quad \frac{1}{2} \|W x - b\|^2 + \lambda_1 \beta \|x\|_1 + \frac{\lambda_1 (1-\beta)}{2} \|x\|^2 + \lambda_2 J(F x).
\end{equation}
Here, $b$ is a vector of labels; $W$ is a data matrix; $\lambda_1, \lambda_2, \beta \ge 0$ are tuning parameters; and $F$ is a matrix encoding relationships among the features. The regularizer $J$ is the \emph{Moreau-Yosida envelope} of the $\ell_1$-norm (also known as the \emph{Huber} norm \cite{huber}):
\begin{equation}
\label{eq:J}
    J(F x) \defeq \big(\|\cdot \|_1 \square \frac{\lambda_3}{2} \|\cdot\|^2 \big) (F x) = \min_{u \in \mathbb{R}^d} \|F x - u\|_1 + \frac{\lambda_3}{2} \|u\|^2,
\end{equation}
where $\square$ denotes the infimal convolution and $\lambda_3 \ge 0$ is a tuning parameter.

Problem \eqref{eq:ggfen} is a linear regression problem with regularization to promote sparsity in $x$ and preserve correlations among features. The matrix $F$ is defined so that
\begin{equation}
    \|F x\|_1 = \sum_{(i,j) \in \mathcal{I}} |x_i - x_j|,
\end{equation}
where $\mathcal{I} \subset \{1,\cdots,d\} \times \{1,\cdots,d\}$ is an index set containing the top 10\% most correlated pairs of features in $W$ (we defer to \cite{sadmm} for a more general description of $F$ for the graph-guided LASSO problem). Problem \eqref{eq:ggfen} contains many popular problems as special cases. With $\beta = 1$ and $\lambda_2 = 0$, this problem reduces to the LASSO problem \cite{lasso}, and we recover elastic net regularization when $\beta$ varies \cite{elasticnet}. With $\beta = 1$ and $\lambda_3 \to \infty$, this is the graph-guided fused LASSO problem \cite{gglasso,fusedlasso}, and with $\lambda_1 = 0$, our regularizers reduce to a single Huber-type penalty \cite{huber}. Letting $\lambda_3$ vary smooths the $\|F x\|_1$ penalty for easier optimization; we test the smoothed problem in this section and the non-smoothed problem in the following section.

In the Condat--V\~u algorithms, we let 
\begin{equation}
    f(y) = \lambda_2 J(y), \quad g(x) = \lambda_1 \beta \|x\|_1 + \frac{\lambda_1 (1-\beta)}{2} \|x\|^2, \quad h(x) = \tfrac{1}{2} \|W x - b\|^2.
\end{equation}
The conjugate $f^*(y) = \iota_{\|\cdot\|_{\infty} \le \lambda_2}(y) + \tfrac{1}{2 \lambda_2 \lambda_3} \|y\|^2$ has the following proximal operator, defined element-wise:
\begin{equation}
    (\prox_{\eta f^*}(z))_i = \tfrac{z_i}{1 + \eta / (\lambda_2 \lambda_3)} \cdot \min\{\tfrac{\eta (1 + \eta / (\lambda_2 \lambda_3))}{|z_i| \lambda_2}, 1\}.
\end{equation}
The proximal operator of $g$ is $(\prox_{\eta g}(z))_i = \textnormal{Soft}(\tfrac{z_i}{1 + \eta \lambda_1 (1-\beta)}, \tfrac{\eta \lambda_1 \beta}{1+\eta \lambda_1 (1-\beta)})$, where $\textnormal{Soft}(z,\lambda)$ $\defeq \textnormal{sgn}(z_i) \max \{ |z_i| - \lambda, 0 \}$ is the soft-thresholding operator (and $\textnormal{sgn}(\cdot)$ is the sign function).



Because $g$ is strongly convex with constant $\mu_g = \lambda_1 (1-\beta)$, and $f^*$ is strongly convex with constant $\mu_{f^*} = \frac{1}{\lambda_2 \lambda_3}$, we can use the linearly convergent ACV algorithm in Algorithm \ref{alg:acv4}. 
%
We run this experiment using four standard benchmark datasets from \texttt{LibSVM}\footnote{\url{https://www.csie.ntu.edu.tw/~cjlin/libsvmtools/datasets/}}, including \texttt{australian} ($W \in \mathbb{R}^{690 \times 15}$), \texttt{mushrooms} ($W \in \mathbb{R}^{8124 \times 113}$), \texttt{phishing} ($W \in \mathbb{R}^{11055 \times 69}$), and \texttt{ijcnn} ($W \in \mathbb{R}^{49990 \times 23}$). The columns of $W$ are rescaled to $[-1,1]$ prior to the experiment. We set the tuning parameters to $\lambda_1 = \lambda_2 = \tfrac{1}{10}$, $\beta = \tfrac{1}{2}$, and $\lambda_3 = 10^3$.

We set the step sizes, extrapolation parameter, and momentum parameter in ACV to the values suggested in Algorithm \ref{alg:acv4}. For the test of the traditional Condat--V\~u algorithm, we use parameters suggested by theory \cite[Eq.\! 48]{CPrates}, labelled ``book'' parameters, and tuned parameters. To tune the parameters, we search the set $i \times 10^{-j} \taustep_{\textnormal{acc}}$, $i \in \{1, 2, \cdots, 10\}$ and $j \in \mathbb{N}$ to find the largest primal step size that ensures convergence, where $\taustep_{\textnormal{acc}}$ is the primal step sizes suggested by our theory for ACV. We find a wide range of dual step sizes and extrapolation parameters can be used without affecting performance, but the algorithm is sensitive to the primal step size value. Figure \ref{fig:cv-elastic-net} shows the results of these experiments along with the convergence rates predicted by theory.

From Figure \ref{fig:cv-elastic-net}, we see that Accelerated Condat--V\~u significantly outperforms its non-accelerated variant. This large discrepancy exists because the ratio $L / \|A\|_{\textnormal{op}}$ is large---for example, in the \texttt{mushrooms} dataset, $L = 6.1 \times 10^4$---and this is the regime where Accelerated Condat--V\~u offers the most improvement over the traditional Condat--V\~u algorithm.

\subsection{Fused elastic net without smoothing}

In this section, we consider problem \eqref{eq:ggfen} with $J(Fx)$ replaced by $\|F x\|_1$ (equivalently, we set $\lambda_3 \to \infty$ in the definition of $J$ \eqref{eq:J}). With this objective, $f^*$ is no longer strongly convex, so we must apply Algorithm \ref{alg:acv2}. We keep all the tuning parameters set to the values in the previous section. For ACV, we use the recommended parameters suggested by Theorem \ref{thm:cvacc}. We find that ACV converges at a fast rate using the constant parameter settings of the warm-up phase, so we set $T_0 = \infty$ in each of these experiments. For the traditional Condat--V\~u algorithm, we use the step sizes suggested in \cite[Eq. 40]{CPrates} as the ``book'' parameter settings. For the ``tuned'' parameter settings, we use the book settings, but scale the initial dual step size $\gamma_0$ by $10^j$ for $j \in \{-5,-4,\cdots,4,5\}$ and record the instance that provides the fastest convergence. Figure \ref{fig:cv-elastic-net-nonsmooth} shows the results of these experiments. worse than $(1 + \sqrt{\kappa_P})^{-T}$. In every case, ACV significantly outperforms traditional Condat--V\~u.

\begin{figure}[!t]
\centering
 \includegraphics[width=0.99\linewidth]{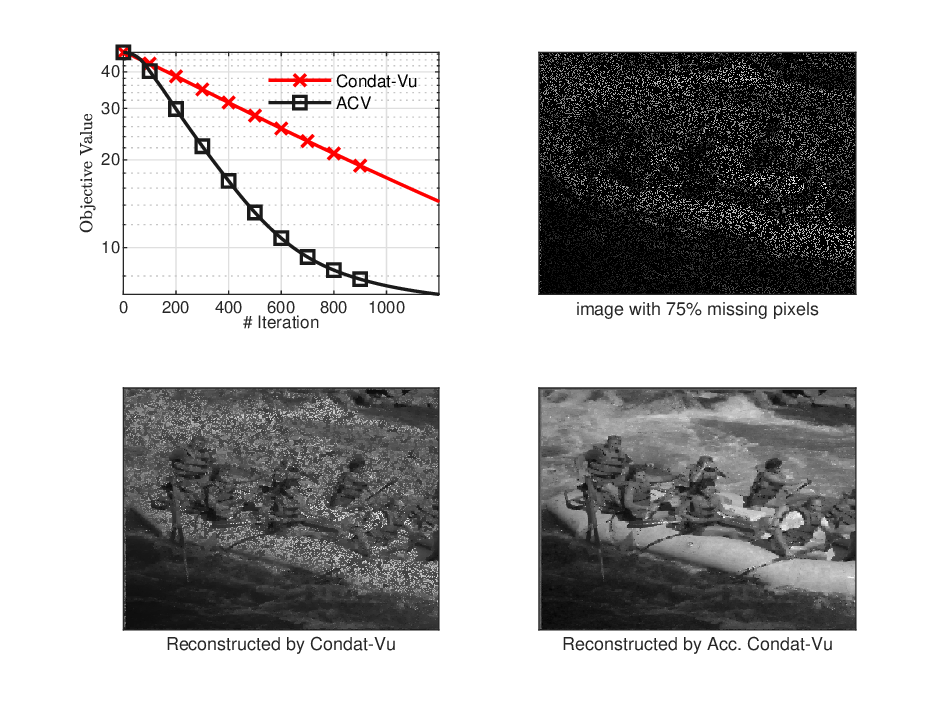} 
\caption{
Results for the inpainting experiment (example 1). Here we compare the performance of CV with fine-tuned step-sizes and ACV with the step-size rule \eqref{step_size_default}. In the second row we show the images reconstructed by the algorithms at the 1200th iteration.}
\label{fig:ip1}
\end{figure}

\begin{figure}[!t]
\centering
 \includegraphics[width=0.99\linewidth]{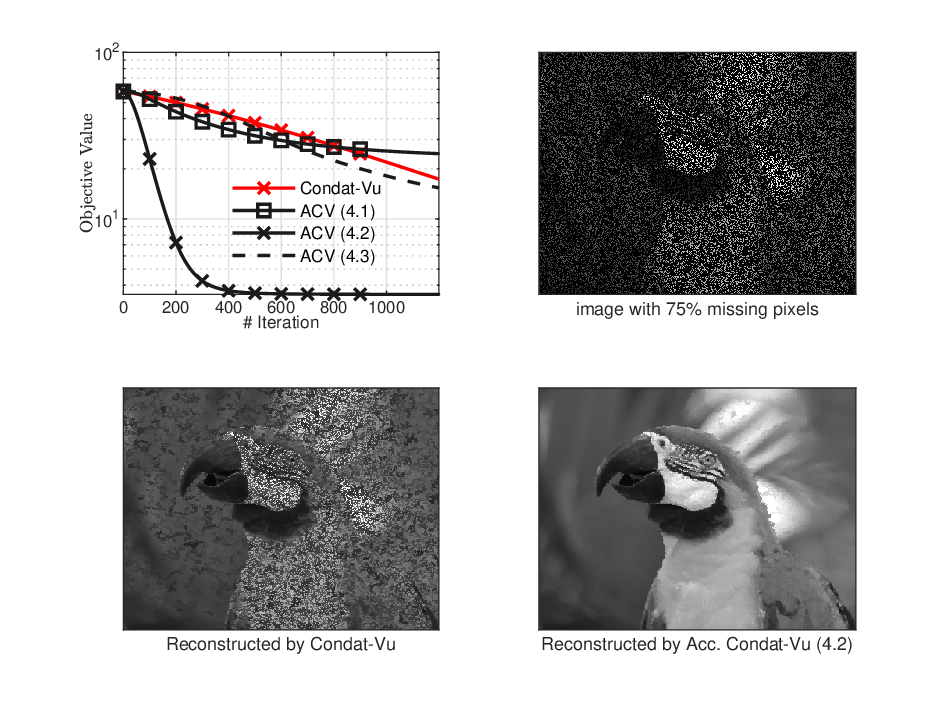} 
\caption{
 Results for the inpainting experiment (example 2) with strong-convexity on $g$. Here we compare the performance of CV with fine-tuned step-sizes and ACV with all three step-size rules. We can observe that the ACV presented in Alg. 4.2 provide significant acceleration. When there is significant strong-convexity in the objective, it is crucial for the ACV to select the step-size rule which exploits the strong-convexity. In the second row we show the images reconstructed by the algorithms at the 1200th iteration.}
\label{fig:ip2}
\end{figure}

\begin{figure}[!t]
\centering
 \includegraphics[width=0.99\linewidth]{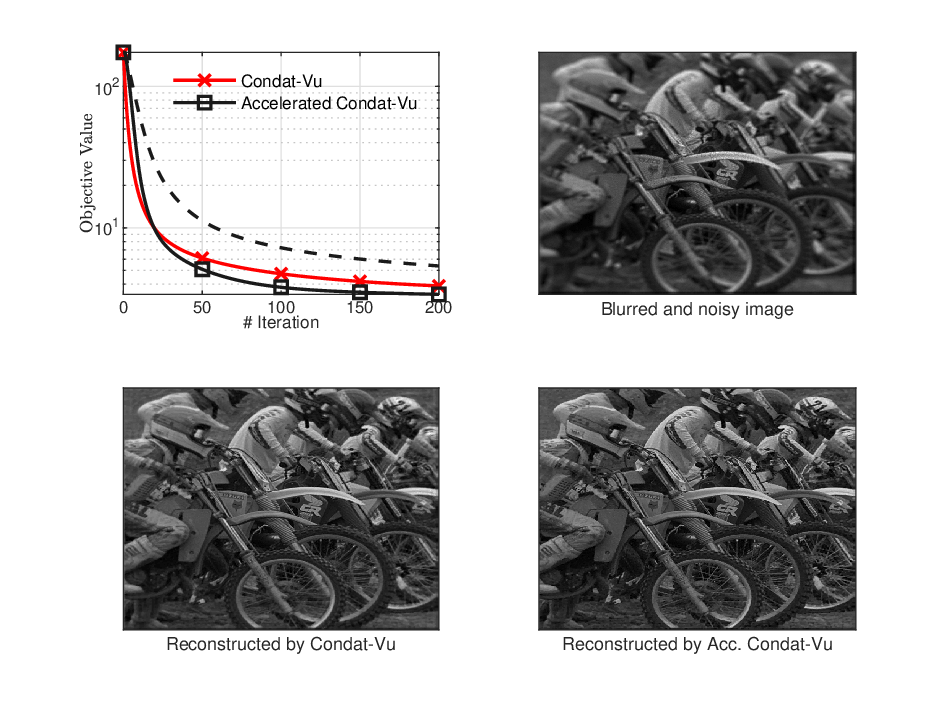} 
\caption{
Results for the deblurring experiment (example 1). Here we compare the performance of CV with fine-tuned step-sizes and ACV with the step-size rule \eqref{step_size_default}. Here the solid black curve records the performance of ACV with the rescaling trick \eqref{res}, while the dash curve is for ACV without using the rescaling trick (same for the following figures). In the second row we show the images reconstructed by the algorithms at the 200th iteration.}
\label{fig:deb}
\end{figure}

\begin{figure}[!t]
\centering
 \includegraphics[width=0.99\linewidth]{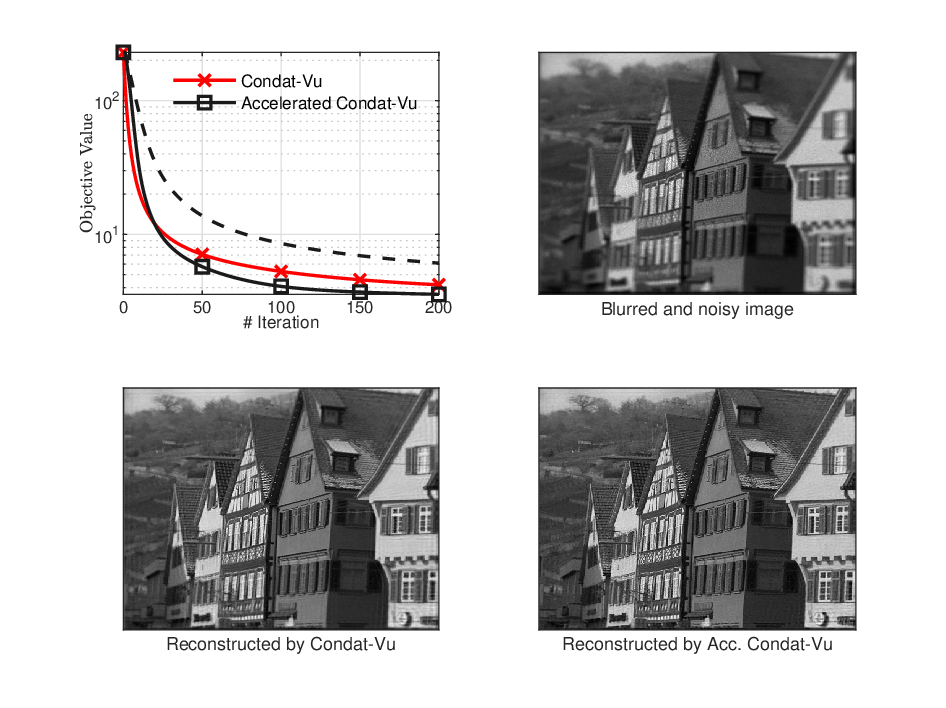} 
\caption{
Results for the deblurring experiment (example 2). Here we compare the performance of CV with fine-tuned step-sizes and ACV with the step-size rule \eqref{step_size_default}. Here the solid black curve records the performance of ACV with the rescaling trick \eqref{res}, while the dash curve is for ACV without using the rescaling trick. In the second row we show the images reconstructed by the algorithms at the 200th iteration.}
\label{fig:deb2}
\end{figure}

\begin{figure}[!t]
\centering
 \includegraphics[width=0.99\linewidth]{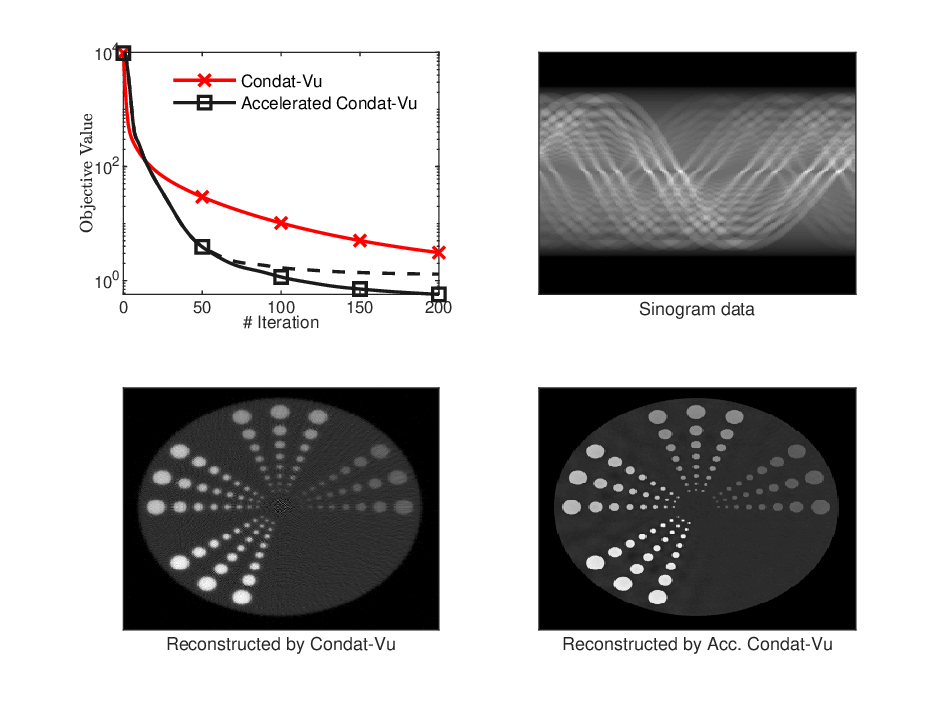} 
\caption{
Results for X-ray CT experiment (example 1). Here we compare the performance of CV with fine-tuned step-sizes and ACV with the step-size rule \eqref{step_size_default}. Here the solid black curve records the performance of ACV with the rescaling trick \eqref{res}, while the dash curve is for ACV without using the rescaling trick. In the second row we show the images reconstructed by the algorithms at the 200th iteration. }
\label{fig:CT}
\end{figure}

\begin{figure}[!t]
\centering
 \includegraphics[width=0.99\linewidth]{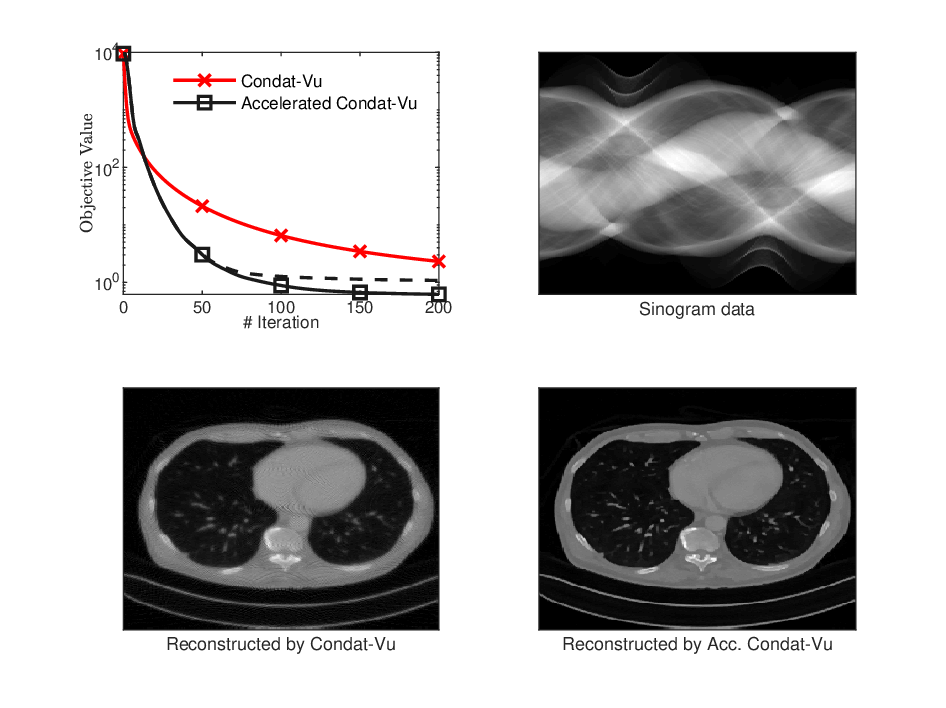} 
\caption{
Results for X-ray CT experiment (example 2). Here we compare the performance of CV with fine-tuned step-sizes and ACV with the step-size rule \eqref{step_size_default}. Here the solid black curve records the performance of ACV with the rescaling trick \eqref{res}, while the dash curve is for ACV without using the rescaling trick. In the second row we show the images reconstructed by the algorithms at the 200th iteration.}
\label{fig:CT2}
\end{figure}

\subsection{Image Deblurring}
We next consider deblurring task where we seek to minimize:
\begin{equation}
\label{eq:tv_db}
    \min_x \quad \frac{1}{2} \|Mx - b\|^2 + \lambda_1  \|D x\|_1 + \iota_{\mathbb{R}_+}(x),
\end{equation}
where $b$ is the blurred noisy image, with a non-uniform out-of-focus blur across the image, while the blurring effect is increasingly more severe from the center to the edge of the image. Here, the forward operator $M$ encodes the blurring process, $\lambda > 0$ is the regularization parameter, $D$ is the forward difference matrix, $\iota_C$ is the indicator function of set $C$, and $\mathbb{R}_+$ is the non-negative cone. We set $h(x) = \frac{1}{2} \|Mx - b\|^2$, $g = \iota_{\mathbb{R}_+}(x)$, and $f$ to be the $\ell_1$-norm with conjugate $f^*(y) = \iota_{\|\cdot\|_{\infty} \le \lambda_1}$. We then reformulate the primal problem \eqref{eq:tv_db} to the following saddle point problem:
\begin{equation}
    [x^\star, y^\star] = \min_x \max_y \frac{1}{2}\|Mx - b\|_2^2 + \iota_{\mathbb{R}_+}(x) + y^TDx - \iota_{\|\cdot\|_{\infty} \le \lambda_1}(y),
\end{equation}
and solve it using Algorithm \ref{alg:acv} with the recommended step-sizes, with comparison to the fine-tuned {Condat-V\~u} scheme. However, in this example we also have the issue of $L \approx \|D\|$ hence we do not immediately observe acceleration over {Condat-V\~u} with fine-tuned step-sizes. Inspired by the theoretical result we introduce a tuning parameter $\rho$ to scale down the linear operator $D$ and meanwhile scaling-up the indicator constraint accordingly while keeping the optimization problem equivalent to the original one:
\begin{equation}\label{res}
    [x^\star, y^\star] = \min_x \max_y \frac{1}{2}\|Mx - b\|_2^2 + \iota_{\mathbb{R}_+}(x) + \frac{1}{\rho_1}y^TDx - \iota_{\|\cdot\|_{\infty} \le \lambda_1\rho_1}(y),
\end{equation}
Here we choose $\rho=100$ for convenience. This scaling makes a convergence trade-off between the dependence of $\|D\|$ (which is believed to be tight) and the dependence of $\max_{y \in \mathcal{B}_2} \|y - y_0\|_2$ (which is usually believed to be loose). This transformation can be applied to improve the numerical performance for our acceleration scheme, but we do not observe such improvement for the original {Condat-V\~u} algorithm. We report the results of this non-uniform deblurring experiment in Figure \ref{fig:deb} and Figure \ref{fig:deb2}, where we can observe that our scheme can achieve faster convergence rates compared to original {Condat-V\~u} if we perform the rescaling trick to suppress the negative impact of $L \approx \|D\|$ on acceleration.


\subsection{Image Inpainting}
{  We then consider image inpainting task where we seek to inpaint images with significant amounts of missing pixels ($75\%$ of pixels are missing). we consider two scenarios of objective functions. The first scenario would be of the same form as our image deblurring experiments presented in the previous section, with the forward operator replaced by the binary mask. We present the results of this scenario in Figure \ref{fig:ip1}, where we observe again the improvement in terms of convergence speed over classical Condat-Vu scheme.

The second scenario we consider in an additional $\ell_2$ regularization, and test all three sets of parameter choices for our ACV. The objective now reads:
\begin{equation}
\label{eq:tv_ip_sc}
    \min_x \quad \frac{1}{2} \|Mx - b\|^2 + \lambda_1  \|D x\|_1 + \iota_{\mathbb{R}_+}(x) + \frac{\mu_g}{2}\|x\|_2^2,
\end{equation}
where we set $g(x) = \iota_{\mathbb{R}_+}(x) + \frac{\mu_g}{2}\|x\|_2^2$ with $\mu_g = 0.05$. We present our results in Figure \ref{fig:ip2}. The step-size rule of ACV presented in Algorithm 4.2 is tailored for exploting the strong-convexity in $g$ and we can observe that in this example this rule for ACV indeed provides significant acceleration over standard Condat-Vu. On the other hand, the step-size rule in Algorithm 4.2 achieve acceleration initially but plateau later due to ignoring strong-convexity, while rule 4.3 assumes $f^*$ is also strongly-convex, which is not the case in this example, although it is still providing some minor acceleration if we heuristically input a tuned value for $\mu_{f^*}$ in \eqref{scx_recom_step_sizes}. This example demonstrates that if the objective is explicitly strongly-convex, it is recommended to adapt the step-size rule of ACV to exploit such structure to ensure fastest convergence.

}

\subsection{X-ray CT reconstruction}

 In this section we demonstrate our numerical experiments on model-based X-ray CT reconstruction. The underlying optimization problem takes the same form as in the deblurring task \eqref{eq:tv_db}, where in this case the forward operator is the radon transform for a fan beam CT geometry. In this experiment we simulate equally spaced 180 views with 256 rays per view, while the measurement data is corrupted with a small amount of Poisson noise. Here we try the default $\rho_1 = 1$ (denoted by the dash line) and the adjustment $\rho_1 = 0.01$ for our accelerated {Condat-V\~u} scheme. Note that in this example we have $L$ significantly larger than the operator norm of $D$, hence we should expect a clear acceleration of our scheme over the original {Condat-V\~u} according to the theory. Indeed, our results in Figure \ref{fig:CT} demonstrate such acceleration effect, where we can clearly observe that in 200 iterations the our scheme has reached a much higher quality reconstruction comparing to the original {Condat-V\~u}'s reconstruction.

\section{Conclusion}

The fact that the Condat--V\~u algorithm reduces to PGD rather than the accelerated variant makes its suboptimality obvious. Condat--V\~u is restricted to interpolate between the convergence rates of PGD and PDHG depending on relative importance of each function in the composite objective. In regimes where the Condat--V\~u algorithm approximates PGD, its performance is poor. The proposed Accelerated Condat--V\~u algorithm solves this problem. Its performance interpolates between the performance of APGD and PDHG, exhibiting superior performance over this entire spectrum of problems, but especially when the rate of APGD dominates. In addition to the empirical success of ACV in experiments, we also show that ACV achieves optimal convergence rates no matter the strong convexity or smoothness of the composite functions.

\appendix

\section{Visualising the Relationships Among Primal-Dual Algorithms}
\label{app:fig}

In Figure \ref{fig:alg} we present a graph to help the readers in understanding the relationships among the primal-dual algorithms in the literature.

\begin{figure}[h]
\centering
\begin{tikzpicture}[
    sibling distance        = 5.5em,
    level distance          = 10em,
    edge from parent/.style = {draw, -latex},
    every node/.style       = {font=\footnotesize},
    sloped,
    env/.style = {
    shape=rectangle
    ,  rounded corners
    , draw=black!90
    ,  top color=blue!0
    ,  bottom color=blue!20
    , inner sep=1.5mm
    , outer sep=0mm
    }
  ]
\node [env] (topnode) {Davis--Yin Splitting}
    child { node [env] {PDDY/AFBA} }
    child { node [env] {PDFP}
        child { node [env] {PAPA} 
            edge from parent node [above]
                  {$g \equiv \iota_C$}}
          edge from parent [draw=none]
                  }
    child { node [env] {PDDY/AFBA}
        edge from parent [draw=none] }
    child { node [env] {PD3O} 
        child { node [env] (PAPC) {PAPC}
            edge from parent node [above]
              {$g \equiv 0$} }
        child { node [env] (CP) {PDHG}
        child { node [env] {A-H}
        child { node [env] (PGD) {PGD} 
            edge from parent node [above]
            {$f \circ A$ smooth} } 
          edge from parent node [above]
                  {$\theta = 0$} }
        child { node [env] (APGD) {APGD}
          edge from parent[dotted] node [above]
                  {$f \circ A$ smooth} }
        edge from parent node [above] {$h \equiv 0$}} }
    child { node [env] {Condat--V\~u} }
    child { node [env] [right=0mm,top color=red!0,bottom color=red!30] (ACV) {\textbf{ACV (This Work)}}
        child { node [env] {APD} 
            edge from parent node [above]
                  {$g \equiv 0$}}
        edge from parent [draw=none] };
\draw [-latex] (ACV) to[out=100,in=80] node[above]{$\alpha_k = 0$}(topnode-5);
\draw [-latex] (topnode-3) to[out=0,in=180] node[above]{ }(topnode-4);
\draw [-latex] (ACV) -- node [left=10mm,above] {$h \equiv 0$}(CP);
\draw [-latex] (ACV) -- node [left=10mm,above] {$f \circ A \equiv 0$} (APGD);
\draw [-latex] (topnode-5) -- (CP);
\draw [-latex] (topnode-2) -- (PAPC);
\draw [-latex] (PAPC) -- node [left=10mm,below] {$A \equiv I_d$}(PGD);
\draw [-latex] (topnode-3) -- (PAPC);
\draw [-latex] (topnode-3) to[out=80,in=100] (topnode-5);
\draw [-latex] (topnode-4) to[out=-10,in=0] node[above]{$f \circ A \equiv 0$} (PGD);
\draw [-latex] (topnode-5) to[out=-30,in=0] node[above]{$f \circ A \equiv 0$} (PGD);
\draw [-latex] (topnode-1) to[out=-100,in=180] node[above]{$f \circ A \equiv 0$}(PGD);
\end{tikzpicture}
\caption{A summary of primal-dual algorithms and their relationships. }
\label{fig:alg}
\end{figure}
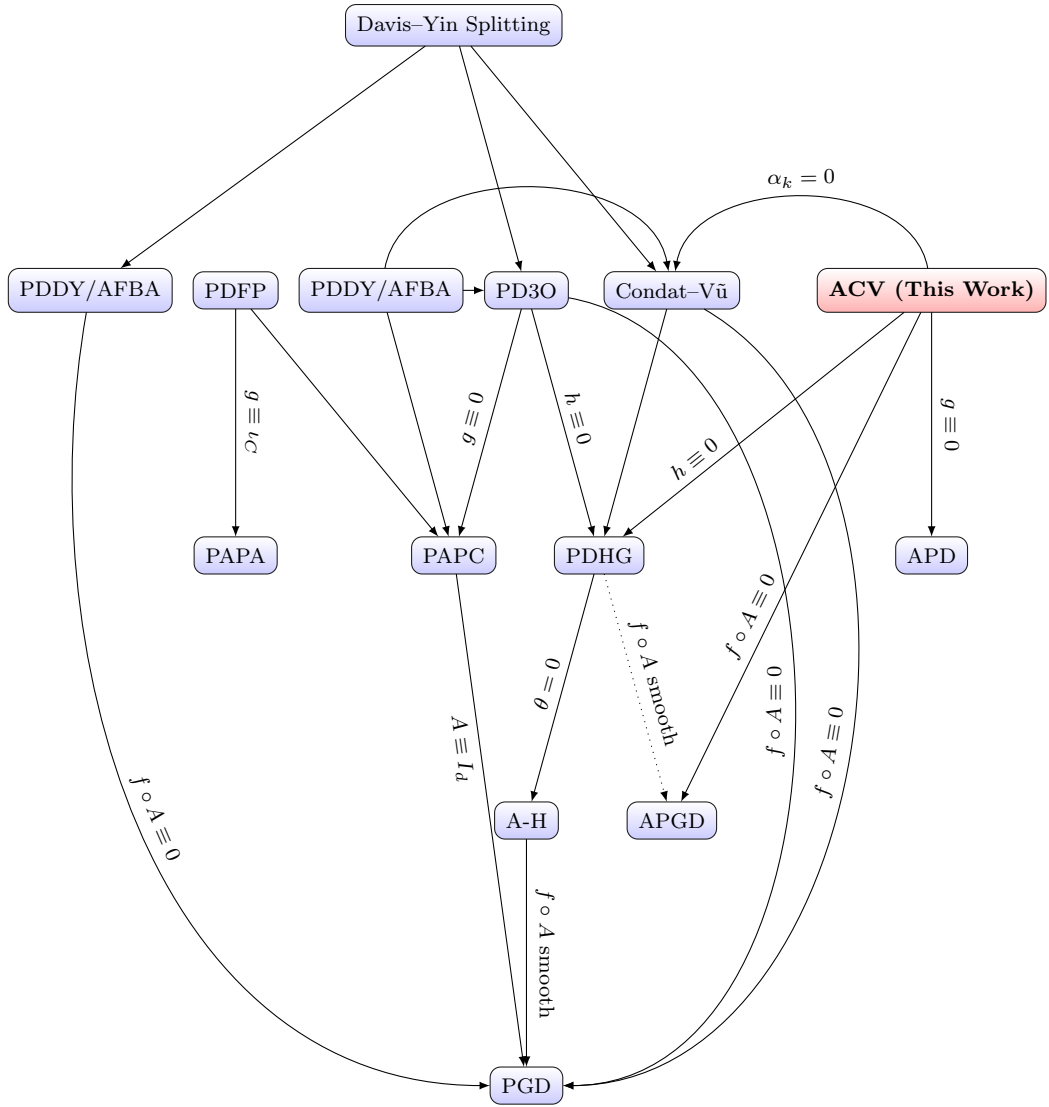

\bibliographystyle{plain}
\bibliography{main.bib}

\end{document}